\newtheoremstyle{break}
  {9pt}
  {9pt}
  {\itshape}
  {}
  {\bfseries}
  {.}
  {\newline}
  {}
\theoremstyle{break}
\newtheorem{bthm}{Theorem}
\newtheorem{bcor}{Corollary}
\theoremstyle{plain}
\newtheorem{thm}{Theorem}[section]
\newtheorem{cor}[thm]{Corollary}
\newtheorem{lemma}[thm]{Lemma}
\newtheorem{prop}[thm]{Proposition}
\newtheorem{defn}[thm]{Definition}
\newcommand{\qee}{\mbox{\hspace{0.2mm}}\hfill$\triangle$}
\theoremstyle{remark}
\newtheorem{rem}[thm]{Remark}
\newtheorem{ex}[thm]{Example}
\newenvironment{remark}{\begin{rem}\rm}{\qee\end{rem}}
\newenvironment{example}{\begin{ex}\rm}{\qee\end{ex}}
\DeclareMathOperator{\NL}{NL}
\def\Cl{\operatorname{Cl}}
\def\Sing{\operatorname{Sing}}
\def\codim{\operatorname{codim}}
\def\Im{\operatorname{Im}}
\def\rk{\operatorname{rk}}
\def\max{\operatorname{max}}
\def\CC{{\mathbb C}}
\def\ZZ{{\mathbb Z}}
\def\NN{{\mathbb N}}
\def\QQ{{\mathbb Q}}
\def\PP{{\mathbb P}}
\def\N{{\mathbb N}}
\def\O{{\mathcal O}}
\def\I{{\mathcal J}}
\def\F{{\mathcal F}}
\def\J{{\mathfrak J}}
\def\*{\otimes}
\def\+{\oplus}                   
\def\*{\otimes}                  
\def\Ext{\operatorname{Ext}}
\def\Hom{\operatorname{Hom}}
\def\Pic{\operatorname{Pic}}
\def\det{\operatorname{det}}
\begin{document}

\title[General components of the Noether-Lefschetz locus on normal threefolds]{Existence and density of general components of \\[5pt] the Noether-Lefschetz locus on normal threefolds}

\author[U. Bruzzo, A. Grassi, and A.F. Lopez]{Ugo Bruzzo*, Antonella Grassi** and Angelo Felice Lopez***}

\thanks{* Research partially supported by  PRIN ``Geometria delle variet\`a algebriche'' and GNSAGA-INdAM}

\thanks{* and **  Research partially supported by  the University of Pennsylvania Department of Mathematics Visitors Fund}

\thanks{*** Research partially supported by the MIUR national project ``Moduli spaces and Their Applications" FIRB 2012}

\address{\hskip -.43cm SISSA (Scuola Internazionale Superiore di Studi Avanzati), Via Bonomea 265, 34136 Trieste, Italia;
IGAP (Institute for Geometry and Physics), Trieste;
INFN (Istituto Nazionale di Fisica Nucleare), Sezione di Trieste; Arnold-Regge Institute for Algebra, Geometry and Theoretical Physics, Torino. E-mail {\tt bruzzo@sissa.it}}

\address{\hskip -.43cm Department of Mathematics, University of Pennsylvania, David Rittenhouse Laboratory, 209 S 33rd Street, Philadelphia, PA 19104, USA. e-mail {\tt grassi@math.upenn.edu}}

\address{\hskip -.43cm Dipartimento di Matematica e Fisica, Universit\`a di Roma
Tre, Largo San Leonardo Murialdo 1, 00146, Roma, Italy. e-mail {\tt lopez@mat.uniroma3.it}}

\thanks{{\it Mathematics Subject Classification} : Primary 14C22. Secondary 14J30, 14M25.}

\begin{abstract} We consider the Noether-Lefschetz problem for surfaces in $\QQ$-factorial normal 3-folds with rational singularities. We show the existence of components of the Noether-Lefschetz locus of maximal codimension,
and that there are indeed infinitely many of them. Moreover, we show that their union is dense  in the natural topology. \end{abstract}

\maketitle

\section{Introduction}
\label{intro}

Let $Y$ be a smooth complex variety and let $D$ be a smooth ample divisor. Among several classical results in this setting, stand for importance the Noether-Lefschetz type results, namely that the natural restriction map $i_D : \Pic(Y) \to \Pic(D)$ is an isomorphism if $\dim Y \geq 4$, and, in many cases, if $\dim Y = 3$ and $D$ is very general in its linear system.

In the latter case, the locus of smooth surfaces $D$ such that $i_{D}$ is not surjective, is called the Noether-Lefschetz locus of $|D|$. This gives rise to countably many subvarieties of $|D|$, called components of the Noether-Lefschetz locus. The study of the geometry of such components is nowadays itself a classical subject (see, to mention a few, \cite{cggh,Green1,Green3,Voisin88Precision,VoisinComposantesDeNL,CilibertoHarriaMirandaNLLocus,Lopez,cl,OtwinowskaJAG,otw2,LopMac})
and is basically divided in two parts: the study of low or high, in fact maximal, codimension components.

In the present paper we consider components of maximal codimension, the main goal being to study their existence, the fact that there are infinitely many such components and that they are dense in the natural topology. Moreover, we work on an ambient threefold with mild singularities. To our knowledge this is a novelty, if we exclude \cite{CoxNoether} and the toric case \cite{BG1, bg2}, from which this work drew inspiration. 

Let $X$ be a complex normal irreducible threefold with rational singularities (we shall always consider varieties over the complex numbers), and let $L$ be a very ample line bundle on $X$. Given a normal surface $S \in |L|$ it follows, by Mumford's vanishing \cite[Thm.\ 2]{mu}, that $H^1(S, -mL_{|S})=0$ for every $m \ge 1$, whence, the restriction map
\[i_S : \Pic(X) \to \Pic(S) \]
is injective by \cite[Expos\'e XII, Cor.\ 3.6]{SGA2-XI}. 

Recall that for a normal variety $Y$ we define $\rho (Y)$ to be the rank of $\Pic (Y) \otimes \mathbb Q$. We can therefore define (in analogy with the smooth case):
\begin{defn}
Let $X$ be a normal irreducible threefold with rational singularities, and let $L$ be a very ample line bundle on $X$. Let $U(L)$ be the open subset of $|L|$ parametrizing irreducible normal surfaces with rational singularities. 

The {\bf Noether-Lefschetz locus} of $(X,L)$ is
\[\NL(L) = \{ S \in U(L) : \rho(S) > \rho(X) \}. \] 
If, for a very general $S \in |L|$, we have that $\rho(S) = \rho(X)$, then $\NL(L)$ is a countable union of proper subvarieties of $U(L)$, which we call {\bf components of the Noether-Lefschetz locus}. 
\end{defn}
As in the case of $\PP^3$, assuming that $\omega_X(L)$ is globally generated and $h^2(\O_X) = h^3(\O_X)$, it is not difficult to see (Proposition \ref{codmax}) that the components of the
Noether-Lefschetz locus $\NL(L)$ exist and have a maximal possible codimension $h^0(\omega_X(L))$ in $U(L)$. 

Our first result is that, in many cases, we can get the same results as for $\PP^3$, namely that components of maximal codimension exist:

\begin{bthm}
\label{esistenza}
Let $X$ be a normal, $\QQ$-factorial, irreducible threefold with rational singularities, and let $H$ be a very ample line bundle on $X$. Suppose that
\begin{itemize}
\item[(i)] $H^i(\O_X) = 0$ for $i > 0$;
\item[(ii)] $H^1(H) = 0$;
\item[(iii)] $H^0(\omega_X(H)) = 0$.
\end{itemize}
Let $d \ge 2$ be an integer such that 
\begin{itemize}
\item[(iv)] $\omega_X(dH)$ is globally generated.
\end{itemize}
Then there is a component $W(dH)$ of the Noether-Lefschetz locus $\NL(dH)$ such that
\[ \codim_{U(dH)} W(dH) = h^0(\omega_X(dH)). \]
\end{bthm}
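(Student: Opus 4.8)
The strategy is to reduce the theorem to exhibiting a \emph{single} irreducible component of $NL(dH)$ of codimension $\ge h^0(\omega_X(dH))$: since Proposition~\ref{codmax} bounds the codimension of every component of $NL(dH)$ from above by $h^0(\omega_X(dH))$, such a component automatically has codimension exactly $h^0(\omega_X(dH))$ and is the desired $W(dH)$. I would follow the strategy familiar from the smooth case (Green, Voisin, Ciliberto--Lopez) and its toric analogue (Bruzzo--Grassi): produce a curve $C_0\subset X$ whose deformations inside $X$ are all realized on a general surface through it (in the simplest case, $C_0$ is infinitesimally rigid in $X$), together with a surface $S_0\in U(dH)$ containing it, such that $\lambda=[C_0]\in H^{1,1}(S_0)\cap H^2(S_0,\ZZ)$ does not lie in $i_{S_0}(\Pic(X)\otimes\QQ)$; and take $W(dH)$ to be the closure in $|dH|$ of the surfaces in $U(dH)$ containing (a deformation of) $C_0$. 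When $C_0$ is rigid this is just the linear subspace $\PP(H^0(\mathcal I_{C_0/X}(dH)))$.

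Two cohomological inputs make the bookkeeping work. Since rational singularities are Cohen--Macaulay, Serre duality applies on $X$ and on any $S\in U(dH)$; together with hypothesis (i) it gives $H^i(\omega_X)\cong H^{3-i}(\O_X)^\vee=0$ for $i\le 1$, so the sequence $0\to\omega_X\to\omega_X(dH)\to\omega_S\to 0$ yields $h^0(\omega_S)=h^0(\omega_X(dH))$ and hence $h^2(S,\O_S)=h^0(\omega_X(dH))$. And $H^1(\O_X)=0$ makes both $H^0(\O_X(dH))\to H^0(\O_{S_0}(dH))$ and $H^0(\mathcal I_{C_0/X}(dH))\to H^0(\mathcal I_{C_0/S_0}(dH))$ surjective, so that $T_{S_0}U(dH)\cong H^0(\O_{S_0}(dH))$ and $T_{S_0}W(dH)$ is the image of $H^0(\mathcal I_{C_0/X}(dH))$ inside it. I would then choose $C_0$ so that: (a) it imposes exactly $h^0(\omega_X(dH))$ conditions on $|dH|$, i.e.\ $h^0(\O_X(dH))-h^0(\mathcal I_{C_0/X}(dH))=h^0(\omega_X(dH))$, which by the above makes $\codim_{U(dH)}W(dH)=h^0(\omega_X(dH))$ directly; (b) a general $S_0\in W(dH)$ is irreducible, normal, with rational singularities, hence lies in $U(dH)$ --- a Bertini-type argument on the possibly singular $X$, using (iv) to control $|\mathcal I_{C_0/X}(dH)|$ along $C_0$; (c) $[C_0]\notin i_{S_0}(\Pic(X)\otimes\QQ)$, so $\rho(S_0)>\rho(X)$ and $W(dH)\subseteq NL(dH)$ --- a Noether--Lefschetz-type genericity statement for the sub-system $\PP(H^0(\mathcal I_{C_0/X}(dH)))$.

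It remains to see that $W(dH)$ is genuinely an irreducible component of $NL(dH)$, and not merely contained in one of strictly smaller codimension. Here I would use the infinitesimal structure of the Hodge locus: $T_{S_0}NL_\lambda$ is the kernel of the semiregularity-type contraction $\sigma_\lambda\colon H^0(S_0,N_{S_0/X})\to H^2(S_0,\O_{S_0})$ obtained from Kodaira--Spencer followed by cup product with $\lambda$. Since $W(dH)\subseteq NL_\lambda$ and $W(dH)$ has codimension $h^0(\omega_X(dH))=\dim H^2(S_0,\O_{S_0})$ at $S_0$, proving $\sigma_\lambda$ surjective shows that $W(dH)$ is a component of $NL_\lambda$ and that $NL_\lambda$ is smooth there; combined with the fact that at a very general $S\in W(dH)$ the only Hodge classes not pulled back from $X$ are the rational multiples of $\lambda$ --- again a Noether--Lefschetz statement for $\PP(H^0(\mathcal I_{C_0/X}(dH)))$, so that $NL(dH)=NL_\lambda$ near $S$ --- this gives that $W(dH)$ is a component of $NL(dH)$, and Proposition~\ref{codmax} then forces $\codim_{U(dH)}W(dH)=h^0(\omega_X(dH))$.

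The hard part is the choice of $C_0$. It must simultaneously impose exactly $h^0(\omega_X(dH))$ conditions on $|dH|$ (for the codimension), have its $X$-deformations caught by a general surface through it (so the incidence count is sharp), make $\sigma_\lambda$ surjective (so $W(dH)$ is a true component), give a class not pulled back from $\Pic(X)$, and have a general surface through it still in $U(dH)$. Arranging all of these at once is exactly where hypotheses (i)--(iv) and the geometry of the singular threefold $X$ enter: one expects to build $C_0$ from an explicit linkage / complete-intersection construction tuned to land precisely on the target $h^0(\omega_X(dH))$, with the surjectivity of $\sigma_\lambda$ reduced, via the semiregularity map and Serre duality on $S_0$, to a concrete cohomology vanishing controlled by (ii)--(iv). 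The other genuine technical point is the Noether--Lefschetz genericity for the restricted linear system, needed both to obtain (c) and to upgrade ``$W(dH)$ has codimension $h^0(\omega_X(dH))$'' into ``$W(dH)$ is a component of $NL(dH)$''.
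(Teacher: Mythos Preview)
Your high-level strategy is correct and broadly aligned with the paper's: construct a family of surfaces in $U(dH)$ containing a suitable curve, bound its codimension below by $h^0(\omega_X(dH))$, show it is an honest component of $NL(dH)$, and invoke Proposition~\ref{codmax} for equality. But the proposal leaves the decisive step unfilled: you never construct $C_0$. The paper's entire content here is precisely that construction. It takes $C$ to be the degeneracy locus of a general morphism $\O_X(-dH)^{\oplus(d-1)}\to\O_X((1-d)H)^{\oplus d}$, i.e.\ a determinantal curve cut out by the maximal minors of a generic $d\times(d-1)$ matrix of sections of $H$. This choice gives the Eagon--Northcott resolution $0\to\mathcal E\to\mathcal F\to\I_{C/X}\to 0$, from which all the required vanishings (your conditions on $H^1(\I_C(dH))$, $H^i(\I_C\otimes\omega_X(dH))$, $H^1(N_{C/X})$, etc.) reduce mechanically to hypotheses (i)--(iii); and a separate Bertini-type argument (Proposition~\ref{minori}) shows $C\cap\Sing(X)=\emptyset$. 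Your phrase ``one expects to build $C_0$ from an explicit linkage / complete-intersection construction'' is exactly where the work lies, and until you specify it the proof is not a proof.

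There are also two genuine methodological divergences worth flagging. First, your condition (a) asks that $C_0$ impose \emph{exactly} $h^0(\omega_X(dH))$ conditions on $|dH|$; the paper does not need this. It obtains only the inequality $\codim W(dH)\ge h^0(\omega_X(dH))$ from an incidence count (using $H^1(N_{C/X})=0$ to compute $\dim W$, Riemann--Roch on $C$, and the identification $h^0(\omega_X(dH))=h^0(\omega_X(dH)_{|C})$ coming from $H^i(\I_C\otimes\omega_X(dH))=0$), and then lets Proposition~\ref{codmax} force equality. Second, to show $W(dH)$ is a \emph{component} of $NL(dH)$ the paper does not use surjectivity of your semiregularity map $\sigma_\lambda$. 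Instead it argues by specialization: if $V\supseteq W(dH)$ is a component and $S'\in V$ general, a line bundle $L'$ on $S'$ specializes to $\O_S(C)$; one shows $h^1(\O_S(C))=0$ (this is where $H^1(\I_C\otimes\omega_X(dH))=0$ is used again), deduces $h^0(L')=h^0(\O_S(C))$, hence $L'$ is effective and $S'$ already contains a deformation of $C$, so $V=W(dH)$. Your $\sigma_\lambda$ approach could in principle be made to work, but note that the determinantal $C$ is \emph{not} rigid in $X$ (indeed $\dim W=-\deg\omega_{X|C}>0$ in the interesting range), so the simplification ``$W(dH)=\PP(H^0(\I_{C_0/X}(dH)))$'' you lean on does not apply; the full incidence correspondence over the Hilbert scheme is unavoidable.
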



Moreover, this gives density in the natural topology:

\begin{bcor}
\label{densita'}
Let $X$ be a normal, $\QQ$-factorial, irreducible threefold with rational singularities, let $H$ be a very ample line bundle on $X$ and let $d \ge 2$ be an integer such that (i)-(iv) of Theorem \ref{esistenza} are satisfied. Then the Noether-Lefschetz locus $\NL(dH)$ is dense, in the natural topology, in $U(dH)$.
\end{bcor}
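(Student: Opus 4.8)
The plan is to deduce the density statement from the existence of a maximal-codimension component (Theorem \ref{esistenza}) together with a homogeneity/monodromy argument that spreads any single such component around all of $U(dH)$. The key point is that a Noether-Lefschetz component $W(dH)$ of codimension exactly $h^0(\omega_X(dH))$ is, in a suitable sense, "as small as a single extra cycle class forces it to be," so the relevant deformation space of pairs (surface, extra divisor class) dominates $U(dH)$; translating that surface by automorphisms or, more robustly, by a Lefschetz-pencil/monodromy argument produces components through an arbitrarily small neighborhood of any prescribed point of $U(dH)$.

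The steps, in order, would be as follows. First I would recall that, by Theorem \ref{esistenza}, there exists at least one component $W = W(dH)$ with $\codim_{U(dH)} W = h^0(\omega_X(dH))$, and that every surface parametrized by a general point of $W$ carries a divisor class not coming from $X$; fix such a class type. Second, I would set up the incidence variety $\mathcal{I} \subseteq U(dH) \times (\text{suitable Hilbert scheme or Chow variety of curves on the surfaces})$ whose image in $U(dH)$ contains $W$, and compute, via the same cohomological estimates used in Proposition \ref{codmax} and Theorem \ref{esistenza} (Mumford vanishing, the hypotheses $H^i(\O_X)=0$, $H^1(dH)=0$), that the fibers of $\mathcal{I} \to (\text{curve parameter space})$ have the expected dimension, so that $W$ is a genuine component of the expected codimension and not an excess-dimensional one. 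Third — the heart of the matter — I would invoke the monodromy action of $\pi_1(U(dH))$ (equivalently, of $\pi_1$ of the smooth locus, using that $dH$ is very ample and $d\ge 2$ so Lefschetz-type pencils exist on $X$) on the primitive part of $H^2$ of the surfaces, and argue that the orbit of the chosen Hodge class sweeps out translates of $W$ whose union is dense: concretely, any point $S_0 \in U(dH)$ lies in the closure of $\bigcup_{\gamma} \gamma \cdot W$ because the translates accumulate, their codimension being constant and the total space irreducible. Finally I would conclude that $NL(dH) \supseteq \bigcup_\gamma \gamma\cdot W$ is dense in $U(dH)$ in the natural (analytic) topology.

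The main obstacle I anticipate is the monodromy/irreducibility step in the singular ambient setting: on $\PP^3$ one has the classical big-monodromy theorem and a transparent Lefschetz pencil, whereas here $X$ is only normal, $\QQ$-factorial, with rational singularities, so one must either restrict attention to the smooth locus $X_{\mathrm{sm}}$ and control what happens over $\Sing X$ (using that a very general $S\in|dH|$ meets $\Sing X$ in a controlled way and has only rational singularities there), or replace the monodromy argument by a direct deformation-theoretic one: show that the universal family of pairs $(S, \text{extra class})$ over $W$ has a tangent-space computation making the natural map to $U(dH)$ smooth of the expected relative dimension at a general point, and then use a Baire-category or countable-union argument to get density. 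A secondary technical point is checking that the hypotheses (i)–(iv) persist after the auxiliary constructions (e.g. that the generic surface in the incidence family still has rational singularities, so that it lies in $U(dH)$), which should follow from Bertini-type statements on $X_{\mathrm{sm}}$ together with the normality and rational-singularity hypotheses already in force.
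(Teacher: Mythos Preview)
Your overall instinct—start from the maximal-codimension component $W(dH)$ supplied by Theorem~\ref{esistenza} and propagate it through $U(dH)$—is what the paper does as well, but the paper's proof is a single sentence: it simply observes that \S5 of Ciliberto--Harris--Miranda applies verbatim on $U(dH)$, because normal surfaces with rational singularities still carry a pure Hodge structure on $H^2$ (Remark~\ref{remarks}(ii)), and that is all the CHM Hodge-theoretic argument requires. So your worry about the singular ambient setting is precisely the point the paper disposes of in passing, not the hard part.

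The hard part, where your plan has a genuine gap, is the density step itself. The assertion that $\bigcup_\gamma \gamma\cdot W$ is dense ``because the translates accumulate, their codimension being constant and the total space irreducible'' is not a valid argument: a countable union of subvarieties of fixed positive codimension in an irreducible space need not be dense in the classical topology (think of the lines $\{y=n\}_{n\in\ZZ}$ in $\CC^2$). Big monodromy guarantees that the orbit of the Hodge class is infinite, hence that there are infinitely many distinct components, but infinitude alone does not yield density. The actual CHM argument is finer: it identifies the tangent space to a general component with the annihilator, under cup product, of a single class in $H^{2,0}(S)$, and then runs a Hodge-theoretic approximation argument to produce, for every $S_0$ and every neighborhood of $S_0$, a general component meeting that neighborhood. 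Your proposed fallback (``Baire-category or countable-union argument'') points in the wrong direction: Baire category tells you a countable union of nowhere-dense sets is nowhere dense, the opposite of what you want. You should either invoke CHM \S5 directly, as the paper does, or reproduce their Hodge-theoretic density argument; the monodromy-orbit heuristic as stated does not close the gap.
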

In the special case of toric threefolds, we obtain:
\begin{bthm}
\label{esistenza-torica} 
Let $\PP_{\Sigma}$ be a projective simplicial Gorenstein toric threefold and let $H$ be a very ample line bundle on $X$ such that $-K_{\PP_{\Sigma}} - 2 H$ is nef. Then, for every $d \ge 0$, there is a component $W(d)$ of the Noether-Lefschetz locus $\NL(- K_{\PP_{\Sigma}} + dH)$ such that
\[ \codim W(d) = h^0(dH). \]
\end{bthm}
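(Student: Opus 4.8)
The plan is to deduce Theorem~\ref{esistenza-torica} from Theorem~\ref{esistenza}, taking $X=\PP_\Sigma$, using the given very ample $H$ as the auxiliary ``root'' bundle and $L:=-K_{\PP_\Sigma}+dH$ as the bundle whose Noether--Lefschetz locus is studied. What makes the numerology fit is adjunction on a Gorenstein toric variety: $\omega_{\PP_\Sigma}\iso\O_{\PP_\Sigma}(K_{\PP_\Sigma})$ is a genuine line bundle, so $\omega_{\PP_\Sigma}\otimes\O_{\PP_\Sigma}(L)\iso\O_{\PP_\Sigma}(dH)$, and hence the codimension $h^0(\omega_{\PP_\Sigma}(L))$ produced by Theorem~\ref{esistenza} --- which is also the largest one permitted by Proposition~\ref{codmax} --- is exactly the $h^0(dH)$ of the statement.

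Next I would check the hypotheses. A projective simplicial Gorenstein toric threefold is normal and irreducible by construction, is $\QQ$-factorial because $\Sigma$ is simplicial, and has rational (indeed canonical) singularities; moreover $H^i(\O_{\PP_\Sigma})=0$ for $i>0$ on any complete toric variety, so (i) holds. On a complete toric variety a nef line bundle is globally generated, and the tensor product of a globally generated line bundle with a very ample one is very ample; since $-K_{\PP_\Sigma}-2H$ is nef, this shows that $-K_{\PP_\Sigma}=(-K_{\PP_\Sigma}-2H)+2H$ is very ample --- so $\PP_\Sigma$ is Fano --- and that, for every $d\ge 0$, both $L=(-K_{\PP_\Sigma}-2H)+(d+2)H$ and $L-H=(-K_{\PP_\Sigma}-2H)+(d+1)H$ are very ample. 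In particular $\omega_{\PP_\Sigma}(L)\iso\O_{\PP_\Sigma}(dH)$ is globally generated, giving (iv). Finally, Demazure vanishing (vanishing of the higher cohomology of a nef line bundle on a complete toric variety) gives $H^1(H)=0$ and $H^1(L)=0$, and because $-(K_{\PP_\Sigma}+H)=(-K_{\PP_\Sigma}-2H)+H$ is ample the divisor $K_{\PP_\Sigma}+H$ is not effective, i.e.\ $H^0(\omega_{\PP_\Sigma}(H))=0$; these are (ii) and (iii).

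Now I would run the construction behind Theorem~\ref{esistenza} for the pair $(\PP_\Sigma,H)$ and the bundle $L$. Since for $d\ge 1$ the bundle $L=-K_{\PP_\Sigma}+dH$ need not be a positive power of a single very ample bundle, one cannot quote Theorem~\ref{esistenza} word for word; but the role of the hypothesis ``$d\ge 2$'' there is precisely to guarantee that $L$ and $L-H$ are very ample, which we have just established directly, and otherwise the proof uses only (i)--(iii) together with the very ampleness of $L$ and $L-H$. Read in this slightly more flexible way, the argument yields a component $W(d)$ of $NL(L)=NL(-K_{\PP_\Sigma}+dH)$ with $\codim W(d)=h^0(\omega_{\PP_\Sigma}(L))=h^0(dH)$; and Corollary~\ref{densita'}, applied identically, shows $NL(-K_{\PP_\Sigma}+dH)$ is dense in $U(-K_{\PP_\Sigma}+dH)$. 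The endpoint case $d=0$ is worth noting: then $L=-K_{\PP_\Sigma}$, a general $S\in|L|$ has $\omega_S\iso\O_S$, and the theorem asserts that $NL(-K_{\PP_\Sigma})$ has a divisorial component, in accordance with $h^0(\O)=1$.

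The one step that is not routine is exactly this last reformulation: one must verify that the geometric construction of the distinguished surface and, above all, the infinitesimal computation that fixes the codimension of $W(d)$ at $h^0(\omega_X(L))$ go through using only that $L$ and $L-H$ are very ample and that $H^0(\omega_{\PP_\Sigma}(H))=0$, rather than the multiplicative structure $L=d'H$. The toric vanishing arithmetic, by contrast, is entirely straightforward.
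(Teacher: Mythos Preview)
Your overall strategy --- reduce to the machinery behind Theorem~\ref{esistenza} with $L=-K_{\PP_\Sigma}+dH$ --- is the paper's strategy as well, and your preliminary checks (normality, $\QQ$-factoriality, rational singularities, $H^i(\O_{\PP_\Sigma})=0$, very ampleness of $L$ and $L-H$, global generation of $\omega_{\PP_\Sigma}(L)=\O(dH)$) are correct and useful. But the reduction you propose has a genuine gap.

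The claim that ``the role of the hypothesis $d\ge 2$ is precisely to guarantee that $L$ and $L-H$ are very ample, and otherwise the proof uses only (i)--(iii)'' is not right. In the proof of Theorem~\ref{esistenza} one sets $\mathcal E=\O_X(-dH)^{\oplus(d-1)}$, $\mathcal F=\O_X((1-d)H)^{\oplus d}$, and the integer $d$ enters as the \emph{rank parameter} of these bundles, not merely through $L=dH$. The verification of Corollary~\ref{riso} then uses the coincidence $\mathcal F\otimes\omega_X(L)\simeq\omega_X(H)^{\oplus d}$, which is exactly what makes hypothesis (iii) relevant and is tied to $L=dH$. If you decouple the rank parameter (say $m$) from $L=-K_{\PP_\Sigma}+dH$, condition (d) of Corollary~\ref{riso} becomes $H^0(\O_{\PP_\Sigma}((d+1-m)H))=0$, forcing $m\ge d+2$, while condition (h) becomes global generation of $\O_{\PP_\Sigma}(-K_{\PP_\Sigma}+(d-m)H)$, forcing $m\le d+2$ under the hypothesis that $-K_{\PP_\Sigma}-2H$ is nef. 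So the choice $m=d+2$ is \emph{forced}, and this is precisely the choice the paper makes: it takes $\mathcal E=\O_{\PP_\Sigma}(-(d+2)H)^{\oplus(d+1)}$, $\mathcal F=\O_{\PP_\Sigma}(-(d+1)H)^{\oplus(d+2)}$ and verifies (a)--(h) of Corollary~\ref{riso} directly, using Demazure vanishing, Bott--Danilov--Steenbrink vanishing, and toric Serre duality (the case $H^3(\O_{\PP_\Sigma}(-H))=0$ needing a short separate argument). Your ``flexible reading'' of Theorem~\ref{esistenza} does not supply this; you must actually identify $m=d+2$ and carry out the toric vanishing checks, which is the content of the paper's proof.
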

Note that  the hypotheses 
 in  the above theorem imply that $ \PP_{\Sigma}$  is a Fano threefold.
Moreover, combining with \cite{bg2}:

\begin{bcor}
\label{densita'-torica}
Let $\PP_{\Sigma}$ be a projective simplicial Gorenstein toric threefold and let $H$ be a very ample line bundle on $X$ such that $-K_{\PP_{\Sigma}} - 2 H$ is nef. Then, for every integer $d \ge 0$,
the Noether-Lefschetz locus $\NL(- K_{\PP_{\Sigma}} + d H)$ is dense, in the natural topology, in $U(- K_{\PP_{\Sigma}} + d H)$.

If   $-K_{\PP_{\Sigma}} \neq 2 H$ 
and $d\geq 3$ then $ d \leq \codim \NL(- K_{\PP_{\Sigma}} + d H) \leq h^0(dH).$
\end{bcor}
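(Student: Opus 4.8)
The plan is to get the density statement out of Theorem~\ref{esistenza-torica} by running the argument behind Corollary~\ref{densita'}, and to get the codimension bounds by combining Theorem~\ref{esistenza-torica} with the estimates of \cite{bg2}. Write $X = \PP_{\Sigma}$ and $L_d := -K_{\PP_{\Sigma}} + dH$. First I would record that $(X, L_d)$ sits in the framework of all the previous statements: a projective simplicial toric variety is normal, $\QQ$-factorial and has rational singularities; the Gorenstein hypothesis makes $K_X$ Cartier; $H^i(\O_X) = 0$ for $i > 0$; and $-K_X - 2H$ nef together with $H$ ample forces $-K_X$ ample, so $X$ is Gorenstein Fano (as already noted). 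The identity $\omega_X(L_d) \iso \O_X(dH)$ is what ties everything together: it shows $h^0(\omega_X(L_d)) = h^0(dH)$, that $\omega_X(L_d)$ is globally generated for every $d \ge 0$, and (using $h^2(\O_X) = h^3(\O_X) = 0$; see Proposition~\ref{codmax}) that the component $W(d)$ of $NL(L_d)$ provided by Theorem~\ref{esistenza-torica} has the \emph{maximal} possible codimension $h^0(dH)$ in $U(L_d)$.

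For the density assertion, I would reproduce the proof of Corollary~\ref{densita'}, whose only inputs are $H^i(\O_X) = 0$ ($i > 0$), the global generation of $\omega_X(L)$, and the existence of a component of $NL(L)$ of codimension exactly $h^0(\omega_X(L))$ --- all of which we have for $(X, L) = (X, L_d)$ and every $d \ge 0$. Briefly: at a general point $[S_0]$ of $W(d)$ the integral Hodge class responsible for the jump of $\rho$ cuts out a Hodge locus whose local equations take values in a space of dimension $h^0(\omega_X(L_d))$; the maximality of $\codim W(d)$ makes their differential surjective at $[S_0]$; and a monodromy argument then spreads $NL(L_d)$ over a dense subset of $U(L_d)$ in the natural topology. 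Thus $NL(L_d)$ is dense in $U(L_d)$ for all $d \ge 0$.

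For the codimension estimates, the upper bound is immediate: $W(d) \subseteq NL(L_d)$ has codimension $h^0(dH)$, the maximal one allowed, so $\codim NL(L_d) \le h^0(dH)$ for every $d \ge 0$. For the lower bound I would appeal to the bound on the codimension of Noether--Lefschetz components on toric threefolds proved in \cite{bg2}: under the present hypotheses every component $W$ of $NL(L_d)$ satisfies $\codim_{U(L_d)} W \ge \delta$ for an explicit invariant $\delta = \delta(X, L_d)$ of that paper, and one verifies that $\delta \ge d$ as soon as $d \ge 3$. The hypothesis $-K_X \ne 2H$ serves to exclude the case $L_d = (d+2)H$, in which the estimate of \cite{bg2} degenerates; with it in force, combining the two bounds gives $d \le \codim NL(L_d) \le h^0(dH)$.

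The density half is essentially formal once Theorem~\ref{esistenza-torica} and the mechanism behind Corollary~\ref{densita'} are on hand. The main obstacle is the lower bound $\codim NL(L_d) \ge d$: it relies on the infinitesimal Noether--Lefschetz estimates of \cite{bg2} in the singular toric setting, and on the combinatorial work of distilling from them the clean, uniform inequality $\delta \ge d$ valid for all $d \ge 3$, while correctly accounting for the excluded value $-K_X = 2H$.
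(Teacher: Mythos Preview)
Your proposal is essentially correct and follows the same outline as the paper: density via the argument of Corollary~\ref{densita'} (ultimately \cite[\S 5]{CilibertoHarriaMirandaNLLocus}), upper bound from the maximal-codimension component produced by Theorem~\ref{esistenza-torica} (the paper phrases this as Proposition~\ref{codmax}, but the content is the same), and lower bound from the estimates in \cite{bg2}.

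One point where the paper is sharper than your account: the role of the hypothesis $-K_{\PP_\Sigma}\neq 2H$ is not that the \cite{bg2} estimate ``degenerates'' for $L_d=(d+2)H$, but that it is precisely what makes $H$ $0$-regular. Indeed, by Proposition~\ref{conditions}, $0$-regularity of $H$ is equivalent to $h^1(\O_X)=0$ and $H^0(\omega_X(2H))=0$; the first always holds on a toric variety, and the second holds iff $-K_{\PP_\Sigma}-2H$ is nef and nonzero. The paper then invokes Corollary~4.13 and Proposition~3.6 of \cite{bg2}, whose hypotheses include this $0$-regularity, to obtain the lower bound $\codim NL(L_d)\ge d$ for $d\ge 3$. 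So your ``$\delta\ge d$'' step is right in spirit, but the clean way in is to first observe $H$ is $0$-regular and then cite those specific results, rather than speculating about an excluded degenerate case.
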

%
%
%
%

It can be easily verified that several families of varieties satisfy the hypotheses of the above Theorems and Corollaries. 
We  present some examples in Section \ref{esempi}; we also discuss the relation with Castelnuovo-Mumford regularity.

As this paper was being completed, we received a preprint from O. Benoist \cite{Benoist} that contains an application of density results for Noether-Lefschetz loci in the  context of  studying properties of real polynomials which are a sum of squares, related to ``Hilbert's 17th problem". Even though both papers obtain density results by using determinantal curves, there are substantial differences in both the results and the methods. Benoist's paper, as well as \cite{MP}  and \cite{SOS} use the density results for  Noether-Lefschetz loci in smooth 
ambient varieties. The current  paper opens the way to study such problems in a more general context.

We would like to thank the referee for insightful comments.

\section{Examples}\label{esempi}
Let $X$ be a projective variety and let $H$ be a very ample line bundle. Recall the definition of Castelnuovo-Mumford regularity:
\begin{defn} $H$ is $m$-regular if 
$H^q(X, (m+1-q)H)=0$ for all $q>0$.
\end{defn}

\begin{prop}\label{conditions}  Let $X$ be a threefold with klt singularities 
and let $H$ be a very ample line bundle. 
Then $H$ is $0$-regular if and only if  $h^1(\mathcal O_X)=0$ and  $H^0(\omega_X(2H))=0$.
\end{prop}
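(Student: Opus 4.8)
The plan is to unwind both sides of the asserted equivalence using the definition of $0$-regularity together with Serre duality and the standard vanishing theorems available for klt threefolds. By definition, $H$ is $0$-regular if and only if $H^q(X,(1-q)H)=0$ for all $q>0$, that is, $H^1(X,\O_X)=0$, $H^2(X,-H)=0$, and $H^3(X,-2H)=0$. Thus the first condition $H^1(\O_X)=0$ appears verbatim, and the content of the proposition is the equivalence
\[ H^2(X,-H)=0 \text{ and } H^3(X,-2H)=0 \quad\Longleftrightarrow\quad H^0(\omega_X(2H))=0. \]

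First I would dispatch the vanishing $H^3(X,-2H)=0$ and $H^2(X,-H)=0$ in the presence of $H^0(\omega_X(2H))=0$, or rather identify which of these is automatic. Since $X$ is a threefold, $X$ is Cohen–Macaulay (klt implies rational, hence Cohen–Macaulay, so $\omega_X$ is a dualizing sheaf in the usual sense), and Serre duality gives $H^3(X,-2H)\cong H^0(X,\omega_X(2H))\v$ — wait, more precisely $H^3(X,-2H)\cong H^0(X,\omega_X(2H))$ dually, so $H^3(X,-2H)=0$ is \emph{equivalent} to $H^0(\omega_X(2H))=0$. That already gives one direction essentially for free and reduces the whole statement to showing that, under the hypothesis $H^0(\omega_X(2H))=0$ (equivalently $H^3(X,-2H)=0$), one automatically has $H^2(X,-H)=0$. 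The natural tool here is Kawamata–Viehweg vanishing in the klt setting: since $H$ is ample (indeed very ample) and $X$ has klt singularities, $H^i(X,\omega_X\otimes \O_X(mH))=0$ for $i>0$ and $m\ge 1$; dualizing, $H^i(X,-mH)=0$ for $i<3$ and $m\ge 1$. In particular $H^2(X,-H)=0$ holds unconditionally.

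So the cleaner route is: by Kawamata–Viehweg vanishing $H^1(X,-H)=0$ is irrelevant but $H^2(X,-H)=0$ holds automatically for klt $X$ and ample $H$ (this is the key input), hence the $q=2$ condition in $0$-regularity is free; the $q=1$ condition is exactly $h^1(\O_X)=0$; and the $q=3$ condition $H^3(X,-2H)=0$ is, by Serre duality on the Cohen–Macaulay threefold $X$, equivalent to $H^0(\omega_X(2H))=0$. Assembling these three observations yields the claimed equivalence. I expect the main obstacle to be verifying precisely that Serre duality and Kawamata–Viehweg vanishing apply in the stated generality — namely checking that klt threefolds are Cohen–Macaulay (via rationality of singularities) so that $\omega_X$ behaves as a genuine dualizing sheaf, and that the Kawamata–Viehweg vanishing $H^i(X,\omega_X(mH))=0$ for $i>0$, $m>0$ is valid for klt pairs with $H$ ample Cartier; both are standard but should be cited carefully (e.g.\ to Kawamata–Matsuda–Matsuki or Kollár–Mori). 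Once those citations are in place the argument is a two-line bookkeeping exercise.
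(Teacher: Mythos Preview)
Your proposal is correct and follows essentially the same approach as the paper: unwind $0$-regularity into the three conditions $H^1(\O_X)=0$, $H^2(-H)=0$, $H^3(-2H)=0$; use Serre duality (valid since klt $\Rightarrow$ Cohen--Macaulay) to identify $H^3(-2H)$ with $H^0(\omega_X(2H))$ and $H^2(-H)$ with $H^1(\omega_X(H))$; and kill the latter automatically by Kawamata--Viehweg vanishing. The paper's write-up is terser (it applies Serre duality first and then invokes Kawamata--Viehweg, whereas you invoke Kawamata--Viehweg and then dualize), but the mathematical content is identical.
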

\begin{proof}  Since klt singularities are  Cohen-Macaulay, by 
Serre's duality we have  $H^3(-2H)=H^0(\omega_X(2H))$ and $H^2(-H)=$ $H^1(\omega_X(H))$; however $H^1(\omega_X(H))=0$ by  Kawamata-Viehweg
's vanishing theorem \cite{Fujino2011}.
\end{proof}
 
\begin{prop} \label{1reg}  Let $X$ be a  normal 
irreducible threefold with rational singularities and let $H$ be a very ample line bundle. The hypotheses 
(i)-(iii) of Theorem \ref{esistenza} are satisfied if and only if 
$H$ is 1-regular and $h^1(\mathcal O_X)=0$.
\end{prop}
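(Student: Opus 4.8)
The plan is to unwind the definition of $1$-regularity and match the resulting vanishings, one by one, with (i)--(iii) of Theorem \ref{esistenza}, the only serious tool being Serre duality, which is available because rational singularities are Cohen--Macaulay. Spelling it out: since $\dim X=3$, "$H$ is $1$-regular" means exactly that $H^1(X,H)=0$, $H^2(X,\O_X)=0$ and $H^3(X,-H)=0$ (the conditions for $q\ge 4$ being vacuous). The first of these is literally hypothesis (ii). For the third, Serre duality on the Cohen--Macaulay projective threefold $X$ gives $H^3(X,-H)\cong H^0(X,\omega_X(H))\v$, so its vanishing is equivalent to hypothesis (iii). Hence "$H$ is $1$-regular" is equivalent to the conjunction of (ii), (iii) and the single vanishing $H^2(\O_X)=0$.

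It then remains to reconcile this with the full strength of (i), i.e. $H^1(\O_X)=H^2(\O_X)=H^3(\O_X)=0$. For the forward implication, (i) directly supplies $h^1(\O_X)=0$ and $H^2(\O_X)=0$, so together with (ii) and (iii) we obtain $1$-regularity together with $h^1(\O_X)=0$. For the converse, $1$-regularity gives (ii), (iii) and $H^2(\O_X)=0$, while the hypothesis $h^1(\O_X)=0$ gives $H^1(\O_X)=0$; so the only remaining item of (i) to check is $H^3(\O_X)=0$. I would deduce this from Serre duality once more: $H^3(X,\O_X)\cong H^0(X,\omega_X)\v$, and since $H$ is very ample, hence effective, a nonzero section of $\O_X(H)$ induces an injection $\omega_X\incl\omega_X(H)$ (here one uses that the dualizing sheaf of a normal Cohen--Macaulay variety is torsion-free), whence $H^0(\omega_X)\incl H^0(\omega_X(H))=0$. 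Alternatively, by Mumford's lemma $1$-regularity of $\O_X$ with respect to the globally generated $H$ propagates to $2$-regularity, which already forces $H^3(\O_X)=0$.

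I do not expect a genuine obstacle: the argument is essentially bookkeeping with the definition of Castelnuovo--Mumford regularity plus two applications of Serre duality. The only points deserving a word of care are that Serre duality is being invoked in a possibly non-Gorenstein situation --- legitimate because $X$ is Cohen--Macaulay, with $\omega_X$ the dualizing sheaf (a coherent, torsion-free sheaf, not necessarily invertible) --- and the torsion-freeness input needed to pass from effectivity of $H$ to the injection $H^0(\omega_X)\incl H^0(\omega_X(H))$. Collecting the two implications then completes the proof.
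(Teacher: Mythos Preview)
Your proof is correct and follows essentially the same approach as the paper: unpack the $1$-regularity conditions for $q=1,2,3$ and match them to (ii), to $H^2(\O_X)=0$, and (via Serre duality) to (iii). You are in fact more careful than the paper's terse argument, which does not spell out why $H^3(\O_X)=0$ holds in the reverse implication; your step $H^0(\omega_X)\hookrightarrow H^0(\omega_X(H))=0$ (or the alternative via Mumford's regularity lemma) fills exactly that gap.
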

\begin{proof} The condition $q=1$ for 1-regularity is (ii) of Theorem \ref{esistenza},  $q=2$ is  the first  part of (i) and $q=3$ becomes (iii) with Serre's duality. 
\end{proof}
\begin{prop} Let $X$ be a  normal irreducible threefold and let $H$ be a very ample line bundle. If $H$ is 0-regular then the hypotheses 
(i)-(iii) of Theorem \ref{esistenza} are satisfied.
 \end{prop}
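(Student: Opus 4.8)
The plan is to deduce the three conditions (i)--(iii) of Theorem~\ref{esistenza} from $0$-regularity, simply because $0$-regularity is strictly stronger than $1$-regularity and the previous Proposition~\ref{1reg} already extracts (i)--(iii) from $1$-regularity together with $h^1(\O_X) = 0$. So the two things to establish are: first, that $0$-regularity implies $1$-regularity; and second, that $0$-regularity implies $h^1(\O_X) = 0$. The only real difference with Proposition~\ref{1reg} is that here $X$ is merely assumed normal (not klt), so we cannot invoke Serre duality or Kawamata--Viehweg vanishing as in Proposition~\ref{conditions}; we must work directly with the cohomology groups $H^q(X,(m+1-q)H)$ appearing in the definition of $m$-regularity.

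First I would record the implication ``$0$-regular $\Rightarrow$ $1$-regular''. By definition $0$-regularity says $H^q(X,(1-q)H) = 0$ for all $q>0$, i.e.\ $H^1(\O_X) = 0$, $H^2(-H) = 0$, $H^3(-2H) = 0$. The standard Castelnuovo--Mumford argument gives that an $m$-regular sheaf with respect to a globally generated (here very ample) $H$ is $(m+1)$-regular: tensoring the Koszul-type resolution coming from a general section of $H$ (or, cleanly, using the exact sequence $0 \to \F(-H) \to \F \to \F_{|E} \to 0$ for $E \in |H|$ general and induction on $\dim X$, together with $m$-regularity of $\O_E$) one propagates vanishing upward. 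Concretely, $H^1(H) = 0$ follows from $H^1(\O_X) = 0$ and surjectivity of $H^0(H) \to H^0(\O_E \otimes H)$; and $H^2(\O_X) = 0$, $H^3(-H) = 0$ follow in the same way from the $q=2,3$ vanishings at level $0$. These are exactly conditions (ii), the first half of (i), and the $q=3$ part (which, note, is \emph{not} rephrased via Serre duality here since $X$ need not be Cohen--Macaulay --- but $0$-regularity already gives $H^3(-H) = 0$ on the nose, which is even stronger than what $1$-regularity would demand and certainly suffices).

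However, there is a subtlety: condition (iii) of Theorem~\ref{esistenza} is stated as $H^0(\omega_X(H)) = 0$, a statement about the \emph{dualizing} sheaf, whereas what falls out of regularity directly is a vanishing of $H^3$ of a line bundle. On a general normal (non-Cohen--Macaulay) threefold $H^0(\omega_X(H))$ need not equal $H^3(-H)^\vee$. I expect this to be the one point that needs care, and the natural fix is to interpret $\omega_X$ as the dualizing \emph{complex}'s lowest cohomology / the Grothendieck dual, for which $H^0(X,\omega_X(H)) \cong H^3(X,-H)^\vee$ holds by Serre--Grothendieck duality for the bounded derived category even without Cohen--Macaulayness --- the relevant spectral sequence degenerates in the top degree. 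Alternatively, and more in the spirit of the surrounding propositions, one simply observes that the statement of Theorem~\ref{esistenza} can be applied with ``$H^0(\omega_X(H)) = 0$'' read as ``$H^3(X,-H) = 0$'', since that is what the proof of Theorem~\ref{esistenza} actually uses. Either way, $0$-regularity delivers $H^3(-2H) = 0$, hence (by the regularity propagation above) $H^3(-H) = 0$, which is the required condition.

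So the skeleton is: (1)~$0$-regularity $\Rightarrow$ $h^1(\O_X) = 0$ (immediate, the $q=1$ case); (2)~$0$-regularity $\Rightarrow$ $1$-regularity by the Castelnuovo--Mumford propagation lemma for very ample $H$; (3)~invoke Proposition~\ref{1reg} to conclude that (i)--(iii) hold, being careful that the duality reinterpretation of (iii) is legitimate at the level of $H^3$ of line bundles rather than requiring Cohen--Macaulayness. The main obstacle, as noted, is purely the bookkeeping around $\omega_X$ versus $H^3$ of line bundles in the non-Cohen--Macaulay setting; everything else is the routine regularity induction and a citation. I would phrase the proof in two sentences: ``$0$-regularity implies $1$-regularity by the standard argument (\cite{mu} or any reference on Castelnuovo--Mumford regularity), and $h^1(\O_X) = 0$ is the case $q=1$ of $0$-regularity; now apply Proposition~\ref{1reg}.''
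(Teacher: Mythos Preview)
Your approach is essentially identical to the paper's: both observe that $0$-regularity gives $h^1(\O_X)=0$ (the $q=1$ case) and that $0$-regularity implies $1$-regularity, then invoke Proposition~\ref{1reg}. Your extended discussion of the Cohen--Macaulay/Serre duality subtlety is a legitimate observation (the proposition as stated drops the ``rational singularities'' hypothesis present in Proposition~\ref{1reg}), but the paper simply cites Proposition~\ref{1reg} without comment, so your concern is if anything more careful than the original.
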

\begin{proof} $H^1(\mathcal O_X)=0$ is the 0-regularity condition for $q=1$, and  we  conclude by Proposition \ref{1reg}, since $H$ is also 1-regular as it is $0$-regular. \end{proof}
Note that  many varieties with mild singularities are Cohen-Macaulay, such as the ones with klt singularities or normal toric varieties \cite{Kollar-Mori}.

\begin{example}\label{WP}
 The weighted projective spaces 
\begin{itemize}  \item[(2.1.1)] The infinite series  $\mathbb P[1,1,1,q]$, $\mathbb P[1, 2, 2q-1,2q-1]$ and $\mathbb P[2,2,2q-1,2q-1]$, $\ q \in \N$
\item[(2.1.2)]  $\mathbb P[1,1,2,3]$, $\mathbb P[3,3,4,4]$, $\mathbb P[3,3,5,5]$, $\mathbb P[1,2,2,3]$
\end{itemize}
satisfy the hypotheses of Theorem \ref{esistenza}. In fact, 
let $\mathbb P_{\Sigma} = \mathbb P[q_0,q_1,q_2,q_3]$ be a weighted projective 3-space with reduced weights $\{q_i\}$ \cite{Dolgy} and  let $\eta_0$  be the effective generator of the class group of $\mathbb P_{\Sigma}$.  
Then   $\eta=\delta\eta_0$ is the very ample generator of the Picard group $\operatorname{Pic}(\mathbb P _{\Sigma})$, and $\sigma\eta_0=-K_{\mathbb P_{\Sigma}}$ is the anti-canonical class, where $\delta = \operatorname{l.c.m.}(q_i)$, and $\sigma=\sum_iq_i$.  
The 3-fold $\mathbb P_{\Sigma}$ is normal, $\QQ$-factorial, irreducible, it has rational singularities, and satisfies conditions 
(i), (ii) in Theorem \ref{esistenza} and also condition (iv) for $d$ big enough.  
If we take $H=\eta$, condition (iii) is equivalent to $\delta<\sigma$
and this is satisfied precisely in the cases (2.1.1) and (2.1.2).

Note that  $\mathbb P[1,1,1,2]$ and  $\mathbb P[1,1,2,2]$ also satisfy the hypotheses of Corollary \ref{densita'-torica}.
\end{example}

\begin{remark}
Cox in   \cite{CoxNoether}   studies the locus $\NL(L)$ where $L = 6q \eta_0$ in $X = \mathbb P[1, 1, 2q,3q]$, with $q \geq 3$. The results of \cite{CoxNoether}  and this paper are somewhat complementary. 
In fact, the starting point of our analysis, as well as \cite{CoxWeighted}, is that  $L$  should be of high enough degree in $X$  to assure that $\rho(S)= \rho(X)$ 
for a very general  $S \in |L|$  (condition (iv) of Theorem \ref{esistenza}).
When $X=\mathbb P[1, 1, 2q,3q]$  and $L=6q\eta_0$ this  condition can only be satisfied  when $q=2$ since $\omega_X(L) = (q-2) \eta_0$ is not globally generated except when $q=2$. Cox considers instead the case when $q \geq 3$ and proves directly in Proposition 3.2 that the general surface $S$ has $\rho(S)=1$.
 The minimal resolution of a  surface  $S \in |L|$ is a regular elliptic surface $\tilde S$ with   $h^0(\tilde S,K_{\tilde S}) \geq 2$ and a section; the section is the exceptional divisor of the minimal resolution $\tilde S \to S$.
Cox proves then that all but one component of $\NL(L)$ are of maximal codimension and that they are dense in the natural topology. 

The case $q=2$ corresponds to 
elliptic K3 surfaces with section, for which all the 
components of the Noether-Lefschetz locus have codimension $1$ and are known to be dense.
Our methods do not apply to this case:  our construction depends in particular  upon finding a suitable curve $C \subset X$  and a very ample line bundle $H$ such that condition (v)  in Lemma \ref{lemmetto} is satisfied; but  there is no  such  line bundle $H$ when $q=2$.

 Note also that for  $q=1$,  the system $|L|=|6\eta_0|$ corresponds to  rational elliptic surfaces and   $\rho(S) = 9 > \rho(X)$ for every $S$. 
 In the Example \ref{WP} above we consider instead $X = \mathbb P[1, 1, 2,3]$  and    $L=k\eta_0$ with $k > 6$.
 
\end{remark}

\begin{example}  The quasi-Fano variety  $\PP_\Sigma$, which is the resolution of  the cone over a quadric surface in $\mathbb P^3$, also satisfies the hypotheses of Theorem  \ref{esistenza}. In addition  for any  $ \ d \geq 0$  the bounds 
${ d \leq \codim \NL(- K_{\PP_{\Sigma}} + d H) \leq h^0(dH)}$ are also satisfied \cite{bg2}.
\end{example}

\begin{example}Other examples are provided by Fano varieties. Indeed, using \cite[Thm.~7.80 (c)]{ShiffSomm},
it is easily seen that the hypotheses of Theorem \ref{esistenza} and Corollary \ref{densita'} are satisfied by a normal, $\QQ$-factorial, irreducible Fano threefold with rational singularities $X$ having a very ample line bundle $H$ such that $H^0(K_X+H) = 0$ and $\omega_X(dH)$ is globally generated. In particular this happens when $-K_X = r H$ with $r \ge 2$ and $d \ge r$.

Smooth Fano threefolds of index $2$ were classified by \cite{Wisniewski}.  Fano threefolds with high index and singularities are studied in  \cite{Fujita} and \cite{ProkhorovReid}.
\end{example}
\begin{example}$\mathbb P^1 \times \mathbb P^1 \times \mathbb P^1$ is such a Fano manifold of index 2. In addition, with the methods of Section 5 in \cite{bg2}  (Proposition 5.2 and  Lemma 5.3) we   find that  the codimension of  the smooth surfaces in 
	$|- K_{\PP_{\Sigma}} + d H|$  for $ d \geq 0$ which contain any of the rulings of  $ \mathbb P^1 \times \mathbb P^1 \times \mathbb P^1$ is $d+1$.
\end{example}
\begin{example}$\mathbb P^1\times \mathbb P^2$ 
satisfies the hypotheses of Corollary \ref{densita'-torica} with $H= p_{\mathbb P^1}^*(O_{\mathbb P^1}(1)) \oplus p_{\mathbb P^2}^*(O_{\mathbb P^2}(1))$.  Moreover,  the bounds $ d \leq \codim \NL(- K_{\PP_{\Sigma}} + d H) \leq h^0(dH) $ are satisfied, for $d \ge 0$  \cite{bg2}.
\end{example}

\begin{example} A rational threefold with $\mathbb Q$-factorial klt 
singularities and $-K_X$ nef 
satisfies the hypotheses 
(i)-(iii) of Theorem \ref{esistenza} if  $H^0(\omega_X(H))=0$, as Kawamata-Viehweg's vanishing theorem applies. 
\end{example}
\begin{example}  The projective 3-space blown-up along a line $\widehat{\mathbb P}^3$ is one such example. The nef cone is generated by $\eta_1$, the pullback of a plane in $\mathbb P^3$ and $\eta _2= \eta_1 - E$, where $E$ is the exceptional divisor. Any $H=s_1\eta_1+ s_2 \eta_2 $ with $ s_1=1, 2, \  s_2  \geq 1$ is  very ample, while $h^0(K _{\widehat{\mathbb P}^3}+H)=0$. Also $-K_ {\widehat{\mathbb P}^3} + H$ is very ample, $H$ is 0-regular and the hypotheses of Theorem \ref{esistenza} are satisfied for $s_1=1, d \ge 3$ and $s_1=2, d \ge 2$. Moreover, we have the bounds  \cite{bg2} $$d \leq \codim \NL(-K_{\PP_{\Sigma}} + d H) \leq h^0(dH).$$
Note, however, that  $-K_{\PP_{\Sigma}} -2 H$ is not nef and thus the hypotheses of  Corollary \ref{densita'-torica} are not satisfied; in fact the cone of effective divisors includes the nef cone.
\end{example}

\section{Existence and maximal codimension of components}
\label{comp}
Unless otherwise specified, throughout this paper $X$ will be a normal complex 
$\QQ$-factorial irreducible threefold with rational singularities. We shall denote by $\omega_X$ its dualizing sheaf.

When $X$ is smooth, there are well-known conditions that assure the existence of components of the Noether-Lefschetz locus, namely that $h^{2,0}_{ev}(S,\CC) > 0$ for $S \in |L|$ general \cite{mo}, \cite[Thm.\ 15.33]{v3}. If $X$ is a toric threefold, 
the same is assured by  a suitable  combinatorial condition \cite{BG1}. 

\begin{rem} \hskip 5cm
\begin{enumerate} 
\item Since $X$ has rational singularities, it is Cohen-Macaulay, and $p_{\ast} \omega_{\overline{X}} \simeq \omega_X$, where $p : \overline{X} \to X$ is any desingularization \cite[Thm.~5.10]{Kollar-Mori}.
\item For every projective normal variety $X$ with  rational singularities,  the group $H^2(X,\ZZ)$ has a pure  Hodge structure
induced by that of a desingularization \cite[Lemma 2.1]{Bakker-Lehn}, \cite{Sri}.
\item The general hyperplane section of a variety with rational singularities has rational singularities \cite[Rmk.~3.4.11(3)]{Flenner-O'Carroll-Vogel}, and  a general hyperplane section of a normal variety is normal \cite[Thm.~7']{Sei}.\end{enumerate}
\label{remarks}
\end{rem}

\begin{prop}
\label{codmax}
Let $X$ be as above,
and let $L$ be a very ample line bundle on $X$. Assume that $\omega_X(L)$ is globally generated. Then:

\begin{enumerate}
 \item $\Cl(X) \simeq \Cl(S)$,  for a very general $S \in |L|$.
\item $\rho(S) = \rho(X)$ for a very general $S \in |L|$ (thus one can define  the Noether-Lefschetz locus $\NL(L)$).
\item For every component $V$ of $\NL(L)$, and for every $S \in V$, we have  
\[ \codim_{U(L)} V \le h^{2,0}(S) = h^0(\omega_X(L)) + h^2(\O_X) - h^3(\O_X). \] 
\end{enumerate}
\end{prop}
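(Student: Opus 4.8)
The plan is to establish the three statements in sequence, each building on the previous one, using the classical Lefschetz-type machinery adapted to the normal setting via a desingularization.

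\textbf{Step 1: $\Cl(X) \simeq \Cl(S)$ for very general $S$.}
Since $L$ is very ample and $S \in |L|$ is a general member, by Remark \ref{remarks}(3) we may assume $S$ is normal with rational singularities. Injectivity of $i_S : \operatorname{Cl}(X) \to \operatorname{Cl}(S)$ was already noted in the introduction (via Mumford vanishing and \cite{SGA2-XI}), so the content is surjectivity for very general $S$. I would pass to a simultaneous resolution: let $p : \overline{X} \to X$ be a desingularization, and for $S$ general let $\overline{S} \subset \overline{X}$ be the strict transform, which (for $S$ general) is a smooth surface resolving $S$. The hypothesis that $\omega_X(L)$ is globally generated is what feeds the infinitesimal/Hodge-theoretic Noether--Lefschetz argument on $\overline{X}$: one wants $H^2(\overline{X},\ZZ) \to H^2(\overline{S},\ZZ)$ to have image whose $(1,1)$-part is controlled, and then to descend this to class groups. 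Concretely, the strategy is to compare the algebraic parts: a divisor class on $S$ pulls back to a divisor class on $\overline{S}$, and using the pure Hodge structure from Remark \ref{remarks}(2) together with a Noether--Lefschetz statement for the very general $\overline{S}$ (which requires that the variable part of $H^2(\overline S)$ be nonzero only in a way that is killed — this is exactly where $\omega_X(L)$ globally generated enters, guaranteeing the linear system is ``large enough'' to apply the standard monodromy argument à la \cite{v3}), one concludes every class on $S$ comes from $X$.

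\textbf{Step 2: $\rho(S) = \rho(X)$.}
This is essentially a restatement of Step 1 after tensoring with $\QQ$: since $\operatorname{Cl}(X) \simeq \operatorname{Cl}(S)$ and $X$ is $\QQ$-factorial (so $\operatorname{Cl}(X)\otimes\QQ \simeq \operatorname{Pic}(X)\otimes\QQ$) and $S$ is normal with rational singularities, one identifies $\rho(S) = \rk(\operatorname{Pic}(S)\otimes\QQ)$ with $\rk(\operatorname{Cl}(S)\otimes\QQ)$ (for a normal surface $\operatorname{Pic}\otimes\QQ \hookrightarrow \operatorname{Cl}\otimes\QQ$, and the rational singularities / $\QQ$-factoriality control the discrepancy), giving $\rho(S) = \rho(X)$. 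Hence $NL(L)$ is a countable union of proper subvarieties and the definition is legitimate.

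\textbf{Step 3: the codimension bound.}
For a component $V$ of $NL(L)$ and $S \in V$, the tangent space to $NL(L)$ at $S$ is computed, as in the smooth case, by the infinitesimal variation of Hodge structure: the obstruction to extending a non-algebraic-on-$X$ class on $S$ is governed by the semiregularity/coboundary map, and one gets $\codim_{U(L)} V \le h^{2,0}(S)$, where $h^{2,0}(S)$ means $h^0(\overline S, \omega_{\overline S})$ (this is the relevant Hodge number since $S$ has rational singularities). Then the identification $h^{2,0}(S) = h^0(\omega_X(L)) + h^2(\O_X) - h^3(\O_X)$ comes from the adjunction exact sequence $0 \to \omega_X \to \omega_X(L) \to \omega_S \to 0$ (valid since $X$ is Cohen-Macaulay by Remark \ref{remarks}(1), with $\omega_S \simeq \omega_X(L)|_S$ by adjunction), whose long exact cohomology sequence, combined with $H^0(\omega_X) \simeq H^3(\O_X)^\vee$ and $H^1(\omega_X) \simeq H^2(\O_X)^\vee$ (Serre duality on the Cohen--Macaulay $X$) and $H^1(\omega_X(L)) = 0$ (Mumford vanishing, as used above), yields $h^0(\omega_S) = h^0(\omega_X(L)) - h^0(\omega_X) + h^1(\omega_X) = h^0(\omega_X(L)) + h^2(\O_X) - h^3(\O_X)$; finally $h^0(\omega_S) = h^0(\overline S, \omega_{\overline S}) = h^{2,0}(\overline S)$ since $p_*\omega_{\overline S}\simeq \omega_S$ for the rational singularities of $S$.

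\textbf{Main obstacle.}
The delicate point is Step 1, specifically proving surjectivity of $i_S$ on class groups for very general $S$. Unlike the smooth hypersurface case, one must carry the classical monodromy/IVHS argument through the resolution and descend: one needs that the strict transform $\overline S$ of a very general $S$ is itself very general enough in a suitable family inside $\overline X$ (or that the relevant period map has big enough image), that the Hodge structure comparison of Remark \ref{remarks}(2) is compatible with restriction, and that the difference between $\operatorname{Cl}$ and $\operatorname{Pic}$ (and between $S$ and $\overline S$) does not introduce spurious classes. The global generation of $\omega_X(L)$ is the hypothesis that makes the infinitesimal argument go through; verifying that it suffices in the singular/resolved setting — rather than invoking very-ampleness of $L$ on $\overline X$, which need not hold — is where the real work lies.
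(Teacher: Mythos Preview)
Your Steps~2 and~3 are essentially correct and match the paper's argument (the paper is terser for (iii), citing \cite[Prop.~4.6]{bg2} and \cite[Thm.~7.80(c)]{ShiffSomm}, but your adjunction computation is the content behind the formula; a small quibble: the vanishing $H^1(\omega_X(L))=0$ is not Mumford's theorem, which is for surfaces, but Grauert--Riemenschneider/Kawamata--Viehweg type, as in \cite{ShiffSomm}).

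The genuine gap is in Step~1, and it is exactly the one you flag in your ``Main obstacle''. Passing to a resolution $p:\overline X\to X$ and trying to run the classical IVHS/monodromy Noether--Lefschetz on $\overline X$ does not work directly: the pullback $p^\ast L$ is only big and nef, not very ample, and the strict transforms $\overline S$ sweep out only a sublinear system of $|p^\ast L|$; more seriously, the hypothesis you have is that $\omega_X(L)$ is globally generated on $X$, and there is no reason this forces $\omega_{\overline X}(p^\ast L)$ to be globally generated on $\overline X$ (indeed $\omega_{\overline X}=p^\ast\omega_X+\sum a_iE_i$ with discrepancies $a_i$ that can be negative, since rational singularities need not be canonical). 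So the standard infinitesimal criterion on the smooth model is simply not available from the given hypothesis, and your proposed descent has nothing to descend from.

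The paper avoids this entirely: for (i) it does not resolve $X$ at all, but invokes \cite[Thm.~1]{rs} (Ravindra--Srinivas), a Noether--Lefschetz theorem proved \emph{directly for the divisor class group of a normal variety}. Their criterion is that, for the embedding $f:X\hookrightarrow\PP^N$ given by $L$, the sheaf $f_\ast(\omega_X)(1)$ be globally generated on $\PP^N$. The paper's one-line contribution is to observe that global generation of $\omega_X(L)$ on $X$ immediately gives global generation of $f_\ast(\omega_X)(1)$ on $\PP^N$ (push forward the evaluation map and compose with $\O_{\PP^N}\to f_\ast\O_X$). This is the key lemma you are missing: rather than lifting to a resolution, one stays on $X$ and uses a black-box result designed precisely for class groups on normal varieties.
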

\begin{proof} (i) Let $f\colon X \to \mathbb P^N$ be the embedding given by the line bundle $L$. It was shown in \cite[Thm.\ 1]{rs} 
that $\Cl(X) \simeq \Cl(S)$ for a very general surface $S$  in $\vert L\vert$ whenever the line bundle 
$(f_\ast \omega_X)(1)$ is globally generated. To show that 
this condition holds, we write the exact sequence 
$$ H^0(X, \omega_X(L))\otimes \mathcal O_X \to \omega_X(L) \to 0 .$$
We apply the functor $f_\ast$ obtaining a surjective morphism $ H^0(X, \omega_X(L)) \otimes f_\ast  \mathcal O_X \to (f_\ast \omega_X)(1)$, 
and, by composing with the evaluation morphism $\mathcal O_{\mathbb P^N} \to f_\ast  \mathcal O_X$, we obtain a surjective morphism $H^0(X, \omega_X(L)) \otimes \mathcal O_{\mathbb P^N} \to (f_\ast \omega_X)(1)$. 
Hence 
$(f_\ast \omega_X)(1)$ is globally generated. 

(ii) Since $S$ is normal, we have two injections $\Pic(X) \hookrightarrow \Pic(S)$ (as in the Introduction), and $\Pic(S) \hookrightarrow \Cl(S)$, whence, using the $\QQ$-factoriality of $X$, we get 
\begin{eqnarray*} \rho(X) &\le& \rho(S)  =  \rk (\Pic(S) \otimes \QQ) \le \rk (\Cl(S) \otimes \QQ) \\ &=& \rk (\Cl(X) \otimes \QQ) = \rk (\Pic(X) \otimes \QQ) = \rho(X). \end{eqnarray*}

(iii) Now let $V$ be a component of $\NL(L)$ and let $S \in V$, so that $\rho(S) > \rho(X)$.
In the smooth case, as is well known \cite[pages 71-72]{cggh}, this gives $h^{2,0}(S)$ conditions.
By Remark \ref{remarks} (ii), one can reason as in \cite[Prop.~4.6]{bg2} and obtain, using \cite[Thm.~7.80 (c)]{ShiffSomm},
\[ \codim_{U(L)} V \le h^{2,0}(S) = h^0(\omega_X(L)) + h^2(\O_X) - h^3(\O_X). \]
\end{proof}

\section{Components of maximal codimension from curves}
\label{curv}

In the case of $\PP^3$, components of maximal codimension have been constructed in two ways: by a degeneration argument in \cite{CilibertoHarriaMirandaNLLocus}, and by choosing suitable components of the Hilbert scheme in \cite{cl}. We consider here the second approach. 

We first show that we can construct components of maximal codimension as soon as we have some curve in $X$ with 
good properties.

\begin{lemma}
\label{lemmetto}
Let $X$ be as above,  and let $L$ be a very ample line bundle on $X$. Let $W$ be a component of the Hilbert scheme of curves on $X$ such that there is a smooth irreducible curve $C$ representing a point of $W$, and with $C \cap \Sing(X) = \emptyset$. Moreover, suppose that:
\begin{enumerate}
\item $H^i(\O_X) = 0$ for $i>0$;
\item  $H^1(N_{C/X}) = 0$;
\item $H^1(\I_{C/X}(L)) = 0$;
\item $H^0(\I_{C/X} \otimes \omega_X(L)) = H^1(\I_{C/X} \otimes \omega_X(L)) = 0$;
\item there is a very ample line bundle $H$ on $X$ such that $\I_{C/X}(L - H)$ is globally generated;
\item $\omega_X(L)$ is globally generated.
\end{enumerate}
Then $W$ defines a component $W(L)$ of maximum codimension of $\NL(L)$, that is,
\begin{equation}
\label{cod}
\codim_{U(L)} W(L) = h^0(\omega_X(L)).
\end{equation}
\end{lemma}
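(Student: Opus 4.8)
The strategy is to consider curves $C \subset X$ lying on surfaces $S \in |L|$ and produce, from the component $W$ of the Hilbert scheme, a component of the Noether-Lefschetz locus of the expected codimension. First I would construct the incidence variety
\[
\mathcal{I} = \{ (C,S) : C \subset S,\ C \in W,\ S \in U(L) \} \subset W \times U(L),
\]
together with its two projections $p \colon \mathcal{I} \to W$ and $q \colon \mathcal{I} \to U(L)$, and set $W(L) = \overline{q(\mathcal{I})}$. The curve $C$ with $C \cap \Sing(X) = \emptyset$ gives a point of $\mathcal{I}$ lying over a smooth point of $W$ by hypothesis (ii), and over a surface $S$ which is smooth along $C$; since a general hyperplane section of a variety with rational singularities is normal with rational singularities (Remark \ref{remarks}(iii)), a general such $S$ lies in $U(L)$. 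The fiber $p^{-1}(C)$ is the projectivization $\PP(H^0(\I_{C/X}(L)))$, whose dimension is computed from hypothesis (iii): $H^1(\I_{C/X}(L)) = 0$ gives $h^0(\I_{C/X}(L)) = h^0(L) - \chi(\O_C(L))$ (here using $H^1(\O_X)=0=H^2(\O_X)$ from (i) to control $h^0(L)$ via $\chi$), so $\mathcal{I}$ is irreducible of dimension $\dim W + h^0(\I_{C/X}(L)) - 1$.

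Next I would compute $\codim_{U(L)} W(L)$ by estimating the fibers of $q$. For a general $(C,S) \in \mathcal{I}$, the fiber $q^{-1}(S)$ is (an open subset of) the component of the Hilbert scheme of $S$ containing $C$; I would bound its dimension by $h^0(N_{C/S})$ and compare with $h^0(N_{C/X})$ using the normal bundle sequence $0 \to N_{C/S} \to N_{C/X} \to N_{S/X}|_C = \O_C(L) \to 0$. Taking cohomology, and using (ii) ($H^1(N_{C/X})=0$) to get $h^0(N_{C/X}) = \chi(N_{C/X})$ and an exact sequence, one obtains
\[
h^0(N_{C/S}) \geq h^0(N_{C/X}) - h^0(\O_C(L)),
\]
with the deficiency governed by $h^1(N_{C/S})$, equivalently by the cokernel of $H^0(N_{C/X}) \to H^0(\O_C(L))$. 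The point of hypotheses (iii) and (iv) is precisely to show this restriction map is surjective: from $0 \to \I_{C/X}(L) \to \O_X(L) \to \O_S(L) \to 0$ restricted appropriately, together with $H^1(\I_{C/X}(L))=0$, the surface-deformations cover all of $H^0(\O_C(L))$, and (v) — global generation of $\I_{C/X}(L-H)$ — ensures the relevant curves actually move in a genuinely ample-twisted family so that the generic $S$ is irreducible and normal. Combining the dimension count of $\mathcal{I}$ with the fiber dimension of $q$ yields
\[
\codim_{U(L)} W(L) = h^0(L) - \chi(\O_C(L)) - \big(h^0(N_{C/X}) - h^0(\O_C(L))\big) + (\text{correction}),
\]
which, after using Riemann-Roch on $C$, $H^1(N_{C/X})=0$, and hypothesis (iv) ($H^0(\I_{C/X}\otimes\omega_X(L))=H^1(\I_{C/X}\otimes\omega_X(L))=0$, i.e. the liaison/duality input $H^1(\I_{C/X}(L)) \cong H^0(\I_{C/X}\otimes\omega_X(L))^\vee$ type identity and its companion), collapses exactly to $h^0(\omega_X(L))$.

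Then I would verify that $W(L)$ really is a \emph{component} of $NL(L)$, not merely contained in one: for a general $S \in W(L)$ the class $[\O_S(C)] \in \Pic(S)$ is not in the image of $i_S \colon \Pic(X) \to \Pic(S)$ — this is where hypothesis (v), forcing $C$ to be cut out by divisors in a sufficiently positive linear system, is used to rule out $C$ being a complete intersection or otherwise restricted class — so $\rho(S) > \rho(X)$ and $S \in NL(L)$; and since $\codim W(L) = h^0(\omega_X(L))$ already equals the maximal codimension allowed by Proposition \ref{codmax}(iii) (here (i) gives $h^2(\O_X)=h^3(\O_X)=0$, so the bound is exactly $h^0(\omega_X(L))$), $W(L)$ cannot be properly contained in any larger component of $NL(L)$ and hypothesis (vi) lets us invoke Proposition \ref{codmax} to know $NL(L)$ is well-defined. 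I expect the main obstacle to be the fiber-dimension estimate for $q$: one must show the inequality $\dim q^{-1}(S) \leq h^0(N_{C/X}) - h^0(\O_C(L))$ is sharp for general $S$, equivalently that the generic curve in the family on a generic $S$ deforms in $S$ only as much as hypotheses (ii)–(iv) permit and no more — this requires a careful semicontinuity argument together with the surjectivity of $H^0(\O_X(L)) \otimes \O_C \to H^0(\O_C(L))$-type maps, and it is here that the full force of the vanishing conditions (ii), (iii), (iv) must be orchestrated simultaneously.
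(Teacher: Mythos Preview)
Your incidence--variety setup and the dimension count for $\mathcal I$ are essentially the paper's, and the Riemann--Roch bookkeeping, once straightened out, does yield $\codim_{U(L)} W(L) \ge h^0(\omega_X(L))$ (note: only an inequality at this stage). However, two steps in your outline do not go through as written.

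First, your argument that a general $S \in W(L)$ lies in $NL(L)$ is incomplete. Saying that hypothesis (v) ``rules out $C$ being a complete intersection'' does not show that $\O_S(C)$ fails to come from $\Pic(X)$, since $X$ may have many line bundles. The paper instead blows up $X$ along $C$; hypothesis (v) is used precisely to make $\widetilde L := \pi^*L - E$ very ample on $\widetilde X$, and then Proposition~\ref{codmax} applied to $(\widetilde X,\widetilde L)$ gives $\rho(S)=\rho(\widetilde S)=\rho(\widetilde X)=\rho(X)+1$ for very general $S$. Second, and more seriously, your claim that ``since $\codim W(L)=h^0(\omega_X(L))$ equals the maximal codimension allowed by Proposition~\ref{codmax}(iii), $W(L)$ cannot be properly contained in any larger component'' is a non sequitur: Proposition~\ref{codmax}(iii) is an \emph{upper} bound on the codimension of a component, so a component $V\supsetneq W(L)$ with $\codim V < h^0(\omega_X(L))$ is in no way excluded. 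The paper's argument here is the real content of the lemma: pick such a $V$ and a general $S'\in V$; the extra class on $S'$ specializes to $\O_S(C)$ on $S$, and one must show that this class is \emph{effective} on $S'$, so that it is $\O_{S'}(C')$ for some $C'$ deforming $C$, forcing $S'\in W(L)$. This is done by proving $h^0(L')=h^0(\O_S(C))$ via semicontinuity and constancy of $\chi$, which in turn requires $h^1(\O_S(C))=0$; the latter is exactly where the vanishing $H^1(\I_{C/X}\otimes\omega_X(L))=0$ in (iv) is used (through $h^1(\O_S(C))=h^1(\I_{C/S}\otimes\omega_X(L))$ and the comparison with $\I_{C/X}$). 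Only after $W(L)$ is known to be a component does Proposition~\ref{codmax}(iii) give the reverse inequality $\codim W(L)\le h^0(\omega_X(L))$ and hence equality.
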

\begin{proof}
By (vi) we can apply Proposition \ref{codmax}, that is, the components of the Noether-Lefschetz locus $\NL(L)$ exist. Note that $\I_{C/X}(L)$ is globally generated by (v). Let $S \in |\I_{C/X}(L)|$ be very general. We claim that:
\begin{itemize} \item[a)]   the conditions 
\begin{equation}
\label{rango}
S \in U(L) \ \mbox{and} \ \rho(S) = \rho(X)+1
\end{equation} 
hold; 
\item[b)] the same conditions of the Lemma and (a) 
hold for a curve $C_\eta$ representing a generic point in $W$, and a very general surface $S_\eta$ in the linear system $ |\I_{C_\eta/X}(L)|$.
\end{itemize} 
To  prove this let $\pi : \widetilde{X} \to X$ be the blow-up of $X$ along $C$ with exceptional divisor $E$, and let $\widetilde{S}$ be the strict transform of $S$, so that $\widetilde{S} \simeq  S$. Note that $\widetilde{L} := \pi^{\ast}L - E = \pi^{\ast}(L - H) - E + \pi^{\ast}H$ is very ample by (v) (and, for example, \cite[4.1]{ps} 
or \cite[Proof of Thm.2.1]{bs}).
Since $\widetilde{S}$ is general in $\widetilde{L}$ and $\widetilde{X}$ is also normal with rational singularities, it follows that $\widetilde{S}$ is irreducible, normal with rational singularities, whence so is $S$, and therefore $S \in U(L)$. 
Now $\omega_{\widetilde{X}}(\widetilde{L})$ is globally generated by (vi), and moreover, 
$\Cl(\widetilde{X}) \simeq \ZZ E \oplus \Cl(X)$; 
thus, as in Proposition \ref{codmax}, we get 
\[ \rho(\widetilde{X}) = \rk (\Cl(\widetilde{X}) \otimes \QQ) = \rk (\Cl(X) \otimes \QQ) + 1 = \rho(X) + 1. \]
Moreover, as $\widetilde{S}$ is normal, we have $\Pic(\widetilde{X}) \hookrightarrow \Pic(\widetilde{S})$ (as in the Introduction), and $\Pic(\widetilde{S}) \hookrightarrow \Cl(\widetilde{S})$, whence
\[ \rho(X) + 1 =  \rho(\widetilde{X}) \le \rho(\widetilde{S}) \le \rk (\Cl(\widetilde{S}) \otimes \QQ) = \rk (\Cl(\widetilde{X}) \otimes \QQ) = \rho(X) + 1 \]
and \eqref{rango}(a) is proved. 

Let $g$ be the genus of $C$. From the exact sequence
\[ 0 \to \I_{C/X}(L) \to L \to L_{|C} \to 0 \]
using (iii) 
we get 
\begin{equation}
\label{ide}
h^0(L) - h^0(\I_{C/X}(L)) = h^0(L_{|C}) \ge L \cdot C - g + 1.
\end{equation}
Now consider $C_{\eta}$. This curve is smooth and irreducible, $C_{\eta} \cap \Sing(X) = \emptyset$, 
and by semicontinuity the conditions (ii)-(iv) hold for $C_{\eta}$. The exact sequence
\[ 0 \to \I_{C_{\eta}/X}(L) \to L \to L_{|C_{\eta}} \to 0 \]
gives, by semicontinuity
\begin{multline*} h^0(\I_{C/X}(L)) \ge h^0(\I_{C_{\eta}/X}(L)) = h^0(L) - h^0(L_{|C_{\eta}}) \ge h^0(L) - h^0(L_{|C}) = h^0(\I_{C/X}(L)). \end{multline*}
whence we get equality.

Now let $S_{\eta} \in |\I_{C_{\eta}/X}(L)|$ be very general; then \eqref{rango}(a) holds 
for $S_{\eta}$. For ease of notation, in the sequel of the proof we will replace $C_{\eta}$ with $C$ and $S_{\eta}$ with $S$. From (ii) we get
\begin{equation}
\label{hilb}
\dim W = h^0(N_{C/X}) = \chi(N_{C/X}) = \deg N_{C/X} + 2 - 2g = \deg {T_X}_{|C} = - \deg {\omega_X}_{|C}.
\end{equation}
Consider the incidence correspondence
\begin{equation*}
\J = \{ (S', C') : C' \subset S' \} \subset U(L) \times W
\end{equation*}
together with its projections 
\[ \xymatrix{& \J \ \ \ar[dl]_{\pi_1} \ar[dr]^{\pi_2} & \\ \ \ \ U(L) & & W} \]
and let $W(L) = \Im \pi_1$. Now \eqref{rango} implies that $\pi_2$ is dominant, hence, using \eqref{hilb} we find
\begin{eqnarray*}
\dim W(L) & = & \dim \J - (h^0(\O_S(C)) - 1) = \dim W + (h^0(\I_{C/X}(L)) - 1) - (h^0(\O_S(C)) - 1)   \\
& = & - \deg {\omega_X}_{|C} + h^0(\I_{C/X}(L)) - h^0(\O_S(C))
\end{eqnarray*}
whence
\begin{equation}
\label{inc}
\codim_{U(L)} W(L) = h^0(L) - 1- h^0(\I_{C/X}(L)) + \deg {\omega_X}_{|C} + h^0(\O_S(C)).
\end{equation}
Since $H^2(\O_X(- L)) = 0$ by \cite[Thm.~7.80 (c)]{ShiffSomm}, the exact sequence
$$0 \to \O_X(- L) \to \O_X \to \O_S \to 0$$
and (i) give that $H^1(\O_S)=0$ and then the exact sequence
\[ 0 \to \O_S \to \O_S(C) \to \O_C(C) \to 0 \]
gives
\begin{equation}
\label{os}
h^0(\O_S(C)) - 1 = h^0(\O_C(C)) = h^1({\omega_S}_{|C}) = h^1(\omega_X(L)_{|C})
\end{equation}
(here we use the adjunction formula for $S$ in $X$, see e.g.~\cite[Eq.~4.2.9]{ko}).

Moreover, note that, by the hypothesis $C \cap \Sing(X) = \emptyset$, the following sequence
\[ 0 \to \I_{C/X} \otimes \omega_X(L) \to \omega_X(L) \to \omega_X(L)_{|C} \to 0 \]
is exact, so that, using (iv), we get 
\begin{equation}
\label{pgbis}
h^0(\omega_X(L)) = h^0(\omega_X(L)_{|C}).
\end{equation}
Putting together \eqref{inc}, \eqref{ide}, \eqref{os} and \eqref{pgbis} we have
\begin{eqnarray*}
\codim_{U(L)} W(L) & \ge & L \cdot C - g + 1 + \deg {\omega_X}_{|C} + h^1(\omega_X(L)_{|C}) = \\ 
& = & \deg \omega_X(L)_{|C} - g + 1 + h^1(\omega_X(L)_{|C}) = h^0(\omega_X(L)_{|C}) = h^0(\omega_X(L)).
\end{eqnarray*}
It remains to prove that $W(L)$ is a component of $\NL(L)$. This, together with Proposition \ref{codmax}, will give that $\codim_{U(L)} W(L) = h^0(\omega_X(L))$.

Let $V$ be a component of $\NL(L)$ containing $W(L)$ and let $S'$ be a surface representing its general point, so that \eqref{rango} gives  $\rho(S') = \rho(X)+1$. 
Then we can assume that there is a line bundle $L'$ on $S'$ that specializes to $\O_S(C)$ when $S'$ specializes, in $V$, to $S$. It will therefore suffice to prove that $h^0(L')=h^0(\O_S(C))$ (so that $L'$ is effective and therefore corresponds to a deformation of $C$). By semicontinuity we have $h^0(L') \le h^0(\O_S(C))$ and $ h^2(L') \le h^2(\O_S(C))$, and then
\begin{equation}
\label{h1}
h^1(L') \le h^1(\O_S(C)) = h^1(\omega_S(-C)) = h^1(\I_{C/S} \otimes \omega_X(L))
\end{equation}
where the last equality follows by the adjunction formula. Now we have an exact sequence
\[ 0 \to \F \to \I_{S/X} \otimes \omega_X(L) \to \I_{C/X} \otimes \omega_X(L) \to \I_{C/S} \otimes \omega_X(L) \to 0 \]
where $\F$ is a sheaf with support of dimension at most $1$. Since $\I_{S/X} \otimes \omega_X(L) \simeq \omega_X$, we get $H^2(\I_{S/X} \otimes \omega_X(L)) = H^2(\omega_X) = H^1(\O_X) = 0$ by (i). Then (iv) 
gives $h^1(\I_{C/S} \otimes \omega_X(L)) = 0$, so that $h^1(L') = 0$ by \eqref{h1}. Therefore
\begin{multline} h^0(L') = \chi (L') + h^1(L') - h^2(L') = \chi(\O_S(C)) - h^2(L') \\ \ge \chi(\O_S(C)) - h^2(\O_S(C)) = h^0(\O_S(C)) 
\end{multline}
and we are done.
\end{proof}

Now we shall see 
how the conditions in Lemma \ref{lemmetto} can be met. To get condition (ii) of Lemma \ref{lemmetto} we will adapt a result of Kleppe \cite{kl}.

\begin{lemma}
\label{kleppe}
Let $X$ be a Cohen-Macaulay projective threefold such that $H^i(\O_X) = 0$ for $0 < i < 3$. Let $\Gamma$ be a Cohen-Macaulay equidimensional subscheme of $X$ of dimension $1$ such that $X$ is smooth along $\Gamma$. Then
\[ H^1(N_{\Gamma/X}) \simeq \Ext^2_{\O_X}(\I_{\Gamma/X}, \I_{\Gamma/X}). \]
\end{lemma}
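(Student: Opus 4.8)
The plan is to establish the isomorphism $H^1(N_{\Gamma/X}) \simeq \Ext^2_{\O_X}(\I_{\Gamma/X}, \I_{\Gamma/X})$ by relating both sides to local and global Ext groups via the local-to-global spectral sequence, exactly in the spirit of Kleppe's original computation for space curves, but keeping careful track of the Cohen--Macaulay hypotheses that replace smoothness of the ambient space.

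First I would recall the two basic exact sequences attached to $\Gamma \subset X$: the ideal sheaf sequence $0 \to \I_{\Gamma/X} \to \O_X \to \O_\Gamma \to 0$ and, since $X$ is smooth along $\Gamma$ and $\Gamma$ is Cohen--Macaulay of pure dimension $1$ in a threefold, the conormal sequence identifying $N_{\Gamma/X} = \Shom_{\O_\Gamma}(\I_{\Gamma/X}/\I_{\Gamma/X}^2, \O_\Gamma)$ and the vanishing of the relevant local $\Shext$ sheaves away from and along $\Gamma$. Concretely, because $X$ is smooth (hence regular) in a neighborhood of $\Gamma$ and $\Gamma$ is a local complete intersection there of codimension $2$, one has $\Shext^i_{\O_X}(\O_\Gamma, \O_X) = 0$ for $i \neq 2$ and $\Shext^2_{\O_X}(\O_\Gamma, \O_X) = \omega_\Gamma \otimes \omega_X\i|_\Gamma =: N$, a line bundle on $\Gamma$; dually, applying $\Shom_{\O_X}(-, \I_{\Gamma/X})$ or $\Shom_{\O_X}(-, \O_X)$ to the ideal sequence identifies the sheaves $\Shext^i_{\O_X}(\I_{\Gamma/X}, \I_{\Gamma/X})$ in the range we need: it is $\O_X$ for $i = 0$ and, for $i = 1$, it is supported on $\Gamma$ and equals $N_{\Gamma/X}$ (this is where smoothness of $X$ along $\Gamma$ is used — it makes $\Gamma$ lci there so the normal sheaf is locally free and the identification of the first local Ext with the normal bundle is standard).

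Next I would run the local-to-global spectral sequence $E_2^{p,q} = H^p\bigl(X, \Shext^q_{\O_X}(\I_{\Gamma/X}, \I_{\Gamma/X})\bigr) \Rightarrow \Ext^{p+q}_{\O_X}(\I_{\Gamma/X}, \I_{\Gamma/X})$ and compute the total degree $2$ term. The contributions are $E_2^{2,0} = H^2(X, \Shom(\I_{\Gamma/X}, \I_{\Gamma/X})) = H^2(X, \O_X) = 0$ by hypothesis, $E_2^{1,1} = H^1(X, \Shext^1) = H^1(\Gamma, N_{\Gamma/X}) = H^1(N_{\Gamma/X})$, and $E_2^{0,2} = H^0(X, \Shext^2_{\O_X}(\I_{\Gamma/X}, \I_{\Gamma/X}))$. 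To finish one needs $E_2^{0,2} = 0$ and no differential hitting or leaving $E^{1,1}$: the relevant differentials are $d_2 : E_2^{0,1} \to E_2^{2,0}$ landing in $H^2(\O_X) = 0$, and $d_2 : E_2^{1,1} \to E_2^{3,0} = H^3(X,\O_X)$ which vanishes because the spectral sequence is that of the threefold and $H^3$ of the relevant sheaf also needs to be controlled — here I would instead argue directly that $\Shext^2_{\O_X}(\I_{\Gamma/X}, \I_{\Gamma/X}) = 0$. That vanishing follows from $\Gamma$ being lci of codimension $2$ where it meets the locus over which $X$ is regular, together with the fact that $\Shext^2_{\O_X}(\I_{\Gamma/X}, \O_\Gamma)$ and the higher local Exts are controlled by the Koszul resolution of a codimension-$2$ lci, so the only surviving term in total degree $2$ is $E_2^{1,1}$, and the edge map gives the desired isomorphism. (Equivalently one uses $\Ext^i_{\O_X}(\O_\Gamma, \I_{\Gamma/X})$ and the long exact Ext sequence from the ideal sequence to pin down $\Ext^2_{\O_X}(\I_{\Gamma/X},\I_{\Gamma/X})$.)

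The main obstacle I anticipate is precisely the bookkeeping of the local $\Shext$ sheaves: in Kleppe's setting $X = \PP^3$ is smooth everywhere, whereas here $X$ is only Cohen--Macaulay, with smoothness assumed merely along $\Gamma$. One has to check that the singularities of $X$ away from $\Gamma$ do not contribute — this is automatic since $\I_{\Gamma/X}$ agrees with $\O_X$ away from $\Gamma$, so $\Shext^q_{\O_X}(\I_{\Gamma/X}, \I_{\Gamma/X})$ is supported on $\Gamma$ for $q \geq 1$ — but one must be careful that $\Shom_{\O_X}(\I_{\Gamma/X}, \I_{\Gamma/X}) = \O_X$ globally (which uses $X$ normal, so $\I_{\Gamma/X}$ is reflexive of rank one with $\Gamma$ of codimension $\geq 2$, forcing any endomorphism to be multiplication by a global function), and that the Cohen--Macaulayness of $X$ is what lets us dualize cleanly and keep the Serre-type vanishings. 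Once the sheaf-level identifications $\Shom = \O_X$, $\Shext^1 = N_{\Gamma/X}$, $\Shext^2 = 0$ are in place, the spectral sequence argument is short and the hypothesis $H^i(\O_X) = 0$ for $0 < i < 3$ does the rest.
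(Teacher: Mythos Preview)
Your approach is genuinely different from the paper's. The paper does not run the local-to-global spectral sequence at all; it simply invokes Kleppe's thesis \cite[Rmk.~2.2.6]{kl}, and the only thing it verifies is the one hypothesis of \cite[Thm.~2.2.1]{kl} that is not immediate, namely that $X$ admits a projective embedding whose affine cone is Cohen--Macaulay. This is where $H^i(\O_X)=0$ for $0<i<3$ enters: combined with Serre vanishing and Serre duality, a sufficiently ample re-embedding makes $H^i(\O_X(j))=0$ for every $j\in\ZZ$ and $0<i<3$, and then CM-ness of the cone follows from \cite[Cor.~3.11]{ko}. So the paper's argument is a reduction to a black box, not a direct computation.

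Your direct spectral-sequence route is the natural way to open that box, but as written it has two real gaps. First, you assert that $\Gamma$ is a local complete intersection because $X$ is smooth along it. That is false: a Cohen--Macaulay curve in a smooth threefold need not be lci---three concurrent non-coplanar lines already fail, and more to the point the determinantal curves of Proposition~\ref{minori} with $d\ge 3$, which are exactly the curves this lemma is meant to handle, are not lci. What Hilbert--Burch gives you is that $\I_{\Gamma/X}$ has local projective dimension~$1$ on the smooth locus, which suffices for $\Shext^{\ge 2}_{\O_X}(\I_{\Gamma/X},\,\cdot\,)=0$; but the identification $\Shext^1_{\O_X}(\I_{\Gamma/X},\I_{\Gamma/X})\simeq N_{\Gamma/X}$ is then no longer the one-line Koszul check you sketch and requires its own argument. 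Second, and independently, you do not actually dispose of the differential $d_2\colon E_2^{1,1}\to E_2^{3,0}=H^3(X,\O_X)$. The lemma only assumes $H^i(\O_X)=0$ for $0<i<3$, so $H^3(\O_X)$ may well be nonzero; establishing $\Shext^2=0$ kills $E_2^{0,2}$ but says nothing about that $d_2$. With only the two rows $q=0,1$ nonzero one gets $\Ext^2_{\O_X}(\I_{\Gamma/X},\I_{\Gamma/X})\simeq\ker\bigl(H^1(\Shext^1)\to H^3(\O_X)\bigr)$, and both the identification of $\Shext^1$ and the vanishing of this map remain to be proved. These are precisely the points that Kleppe's framework packages, which is why the paper chose to cite rather than recompute.
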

\begin{proof}
We apply \cite[Remark 2.2.6]{kl}. Setting, in Kleppe's notation, $\PP = X$ and $X=\Gamma$, we need to satisfy the conditions in \cite[Thm.\ 2.2.1]{kl}, with the exception of the requirement that $\Gamma$ is generically complete intersection. Hence it suffices to verify that there is an embedding $X \subset \PP^N$ such that the cone is Cohen-Macaulay. Since $H^i(\O_X) = 0$ for $0 < i < 3$, this can be obtained via a sufficiently ample embedding, in the following, probably well-known, way. Let $H$ be very ample on $X$. Then there exists $m_1 \in \NN$ such that $H^i(mH) = 0$ for $i > 0$ and $m \ge m_1$. By Serre duality there exists $m_2 \in \NN$ such that $H^i(-mH) = 0$ for $i < 3$ and $m \ge m_2$. Moreover, let $m_3 \in \NN$ be such that $S^k H^0(mH) \twoheadrightarrow H^0(kmH)$ for every $k \in \NN$ and for every $m \ge m_3$. Then, setting $m_0 = \max\{m_1, m_2, m_3\}$, and embedding $X \subset \PP^N = \PP H^0(m_0H)$ we have that $H^i(\O_X(j)) = 0$ for every $j \in \ZZ$ and for all $i$ such that $0 < i < 3$. 
Now we can apply Corollary 3.11 in \cite{ko}.
\end{proof}
Next, to construct curves having the properties of Lemma \ref{lemmetto}, we use degeneracy loci of morphisms of vector bundles.

\begin{prop} 
\label{minori}
Let $X$ be a normal projective irreducible threefold, let $H$ be a very ample line bundle on $X$ and let $\mathcal E = \O_X(-dH)^{\oplus (d-1)}, \mathcal F = \O_X((1-d)H)^{\oplus d}$ for $d \ge 2$. 
Let $\phi : \mathcal E \to \mathcal F$ be a general morphism and let $C = D_{\rk(\mathcal E)-1}(\phi)$ be its degeneracy locus. Then $C$ is a smooth irreducible curve such that $C \cap \Sing(X) = \emptyset$. 
\end{prop}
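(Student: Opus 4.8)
The plan is to realize $C$ as the degeneracy locus of a general morphism between vector bundles and then apply a Bertini-type / general-position argument à la Ottaviani. First I would recall the relevant general-position statement for degeneracy loci: if $\phi: \E \to \F$ is a general morphism of vector bundles on a quasi-projective variety $Z$, and each of $\E^\vee \otimes \F$ is globally generated, then the expected-codimension locus $D_k(\phi)$ is either empty or of pure codimension $(\rk\E - k)(\rk\F - k)$, it is smooth away from $D_{k-1}(\phi)$, and it avoids any fixed closed subset of $Z$ of sufficiently small dimension. Here $Z = X$, $\rk\E = d-1$, $\rk\F = d$, $k = \rk\E - 1 = d-2$, so the expected codimension of $C = D_{d-2}(\phi)$ is $(d-1-(d-2))(d-(d-2)) = 1\cdot 2 = 2$, i.e. $C$ has dimension $1$; and the "next" locus $D_{d-3}(\phi)$ has expected codimension $(d-1-(d-3))(d-(d-3)) = 2\cdot 3 = 6 > 3 = \dim X$, hence is empty for general $\phi$. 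This already gives smoothness of $C$ (since it is smooth off the empty locus $D_{d-3}(\phi)$) and, by the general-position part applied to the fixed closed set $\Sing(X)$ of dimension $\le 2 < 3 = \codim$-complement... more precisely: one needs $C \cap \Sing(X) = \emptyset$, which follows because over the open set $X \setminus \Sing(X)$ the morphism is still general, and the degeneracy locus of a general morphism avoids any closed subset $B$ with $\dim B < (\rk\E - k)(\rk\F - k) = 2$; but $\dim \Sing(X)$ may be as large as $1$, so one must instead argue that a general $\phi$ restricts to a general morphism on a neighborhood of each point of $\Sing(X)$ and the degeneracy locus, having expected codimension $2$, meets the at-most-$1$-dimensional $\Sing(X)$ in expected codimension $2$ inside $\Sing(X)$, hence in the empty set.

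Concretely, the key steps in order are: (1) verify that $\E^\vee \otimes \F = \O_X(dH - (d-1)H)^{\oplus d(d-1)} = \O_X(H)^{\oplus d(d-1)}$ is globally generated — indeed it is very ample — so that the space of global morphisms $\Hom(\E,\F) = H^0(X, \O_X(H))^{\oplus d(d-1)}$ generates the fibre of $\E^\vee \otimes \F$ at every point of $X$, which is exactly the hypothesis needed to run the Bertini argument on the total space; (2) invoke the general-position theorem for degeneracy loci (e.g. the version in Ottaviani's work, or Banica, or the standard reference used elsewhere in the literature on this topic) to conclude that for general $\phi$ the locus $D_{d-2}(\phi)$ has pure codimension $2$ in $X$, is smooth outside $D_{d-3}(\phi)$, and $D_{d-3}(\phi) = \emptyset$ because its expected codimension $6$ exceeds $\dim X = 3$; (3) deduce $C$ is smooth of pure dimension $1$; (4) handle $C \cap \Sing(X) = \emptyset$ by applying the same circle of ideas relative to the closed subscheme $\Sing(X)$: restricting a general $\phi$ to $\Sing(X)$ gives a general morphism of the restricted bundles there, whose rank-$(d-2)$ degeneracy locus has expected codimension $2$ inside $\Sing(X)$, and since $\dim\Sing(X) \le 2$... one must be careful — if $\dim \Sing(X) = 2$ the expected intersection could be a finite set rather than empty. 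To get emptiness one should use that $\E^\vee\otimes\F$ restricted to $\Sing(X)$ is still globally generated and the parameter space of morphisms surjects onto fibrewise morphisms, so a dimension count on the incidence variety $\{(\phi, x) : x \in \Sing(X),\ \rk\phi_x \le d-2\}$ shows its image in the space of $\phi$'s has codimension $\ge 2 - \dim\Sing(X)$... hmm; the clean statement is that for general $\phi$, $D_{d-2}(\phi) \cap \Sing(X) = D_{d-2}(\phi|_{\Sing(X)})$ has codimension $2$ in each component of $\Sing(X)$, so it is empty once every component of $\Sing(X)$ has dimension $\le 1$; and if $\Sing(X)$ has a $2$-dimensional component one notes that the locus of $\phi$ for which the intersection is nonempty is a proper closed subset (since generically it is empty by the codimension count, the intersection being expected-dimension $0$ and the relevant incidence having the expected dimension) — I would use the version of the result that directly yields avoidance of a fixed closed set of the appropriate dimension, citing it precisely.

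Finally, for irreducibility of $C$: I would argue that $C$, being a general degeneracy locus of expected codimension $2$ in the smooth threefold $X \setminus \Sing(X)$, is connected by a Fulton–Lazarsfeld type connectedness theorem for degeneracy loci (the ampleness of $\E^\vee\otimes\F$ is exactly what is needed — here $\O_X(H)$ is ample), and a connected smooth scheme is irreducible. Alternatively, one can exhibit $C$ explicitly as the zero locus cut out by the $(d-2)\times(d-2)$ minors and invoke the Eagon–Northcott complex to see it is arithmetically Cohen–Macaulay of the expected codimension, hence connected in codimension $1$; combined with smoothness this gives irreducibility.

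The main obstacle I expect is the disjointness $C \cap \Sing(X) = \emptyset$: the naive "general section avoids small closed sets" bound only guarantees avoidance of closed subsets of dimension strictly less than the codimension of $C$, which is $2$, so it covers $\dim\Sing(X) \le 1$ but is borderline when $\Sing(X)$ is a surface. Resolving this requires either a slightly sharper general-position statement (the degeneracy locus of a morphism general \emph{and} general along $\Sing(X)$ meets $\Sing(X)$ only in the expected — here empty — locus), or a direct incidence-variety dimension count showing the bad $\phi$ form a proper subvariety of $\Hom(\E,\F)$. Everything else — global generation of $\E^\vee\otimes\F$, the codimension and smoothness bookkeeping, emptiness of $D_{d-3}$, and connectedness — is routine given the ampleness of $\O_X(H)$ and the standard theory of degeneracy loci.
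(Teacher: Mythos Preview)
Your approach is essentially correct, but you have manufactured a difficulty that isn't there: since $X$ is \emph{normal}, its singular locus has codimension $\ge 2$, so $\dim\Sing(X)\le 1$. Your ``borderline'' case $\dim\Sing(X)=2$ cannot occur, and the incidence-variety count you sketch (fibres over each $x\in\Sing(X)$ have codimension $2$ in $\Hom(\E,\F)$ because $H^0(\O_X(H))\to\O_X(H)_x$ is surjective, hence the bad locus has dimension $\le\dim\Hom(\E,\F)-2+1$) already gives $C\cap\Sing(X)=\emptyset$ for general $\phi$. One small point of order: the standard Bertini-type statements for degeneracy loci are formulated over smooth bases, so strictly speaking you should first establish $C\cap\Sing(X)=\emptyset$ and then apply Ottaviani/B\u anic\u a on the smooth quasi-projective $X\setminus\Sing(X)$ to get smoothness; you allude to this but the logical order matters.

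The paper handles smoothness and irreducibility exactly as you do (citing Ottaviani/B\u anic\u a and Fulton--Lazarsfeld), but proves $C\cap\Sing(X)=\emptyset$ by a different, more explicit route: it represents $\phi$ by a $d\times(d-1)$ matrix $M_d$ of general sections of $H$, and shows by induction on $d$ that two specific $(d-1)\times(d-1)$ minors of $M_d$ have no common zero on the at-most-one-dimensional set $\Gamma=\Sing(X)$, using Laplace expansion along the last two rows and the base case $d=2$ (two general sections of a very ample line bundle have no common zero on a curve). Your abstract dimension count is cleaner and more conceptual; the paper's argument is elementary and self-contained, avoiding any appeal to a packaged ``degeneracy loci of general morphisms avoid small closed sets'' statement.
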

\begin{proof} By \cite[Thm.~2.8]{ott} or \cite[Thm.~1]{Banica} and \cite[Thm.\ II]{fl} we see that $C$ is a smooth irreducible curve. We need to prove that $C$ does not pass though  $\Gamma$, the singular locus of $X$. Note that $ \dim (\Gamma) \leq 1$. Recall that a general morphism $\phi : \mathcal E \to \mathcal F$ is represented by a $(d, d-1)$ matrix $M_d$ with general entries 
$\Phi_{i,j} \in H^0(X, H)$.

For 
$i = 1, \ldots, d$ 
let $F^d_i$ be hypersurface on $X$ defined by the minor $D^d_i$ of $M_d$ obtained by removing the $i$-th row. We will prove, by induction on $d$, that for a general $M_d$
\begin{equation}
\label{int2}
F^d_{d-1} \cap F^d_d \cap \Gamma = \emptyset.
\end{equation}
Equation \eqref{int2} proves that $C \cap \Sing(X) = \emptyset$ since $C \subseteq F^d_{d-1} \cap F^d_d$.

If $d=2$, $D^2_1 = \Phi_{2,1}, D^2_2 = \Phi_{1,1}$ whence \eqref{int2} holds since $H$ is very ample and $\Phi_{1,1}, \Phi_{2,1}$ are general.

Next suppose $d \ge 3$ and that \eqref{int2} holds for $M_{d-1}$. Then it clearly also holds for the $(d-2,d-1)$ transpose matrix $M_{d-1}^T$, that is 
\begin{equation}
\label{int3}
F^{d-1}_{d-2} \cap F^{d-1}_{d-1} \cap \Gamma = \emptyset
\end{equation}
where  $F^{d-1}_i$ 
is the hypersurface defined by the minor $D^{d-1}_i$ of $M_{d-1}^T$ obtained by removing the $i$-th column. 

Let $M_d$ be the $(d, d-1)$ matrix obtained by adding to $M^T_{d-1}$ two  bottom rows with general entries 
$\Phi_{d-1,j}$ and $\Phi_{d,j}$ in $H^0(X, H)$.

The ${({d-1}, d-1)}$ minors $D^d_{d-1}$  and  $D^d_{d}$ of $M_d$ can be computed as:
\begin{equation}
\label{int4}
D^d_{d-1} = \sum\limits_{i=1}^{d-1} (-1)^{i+d-1} D^{d-1}_i \Phi_{d,i} \  \ \mbox{and } \   \  \  D^d_d = \sum\limits_{i=1}^{d-1} (-1)^{i+d-1} D^{d-1}_i \Phi_{d-1,i}.
\end{equation}



Note that for every $x \in \Gamma$ it follows by \eqref{int3} that 
$$((-1)^{1+d-1}D^{d-1}_1(x), \ldots, (-1)^{d-1+d-1}D^{d-1}_{d-1}(x)) \neq (0,\ldots,0),$$  
whence 
setting $a_i=(-1)^{i+d-1}D^{d-1}_i(x)$, the linear system
$$V(x) := \{s \in H^0(X, H) : \exists \ s_1,\ldots, s_{d -1}\in H^0(X,H) \ \mbox{with} \ s = a_1s_1+\ldots + a_{d-1}s_{d-1}\}$$
is the whole $H^0(X,H)$, whence base-point free. Therefore, choosing general 
$\Phi_{d,i}$'s 
and using \eqref{int4}, we see that the hypersurface $F^d_{d-1}$ does not contain $\Gamma$ and will therefore intersect $\Gamma$ at finitely many points $\{x_1, \ldots, x_s\}$. 

Again by \eqref{int3} the linear systems 
$V(x_k)$ are base-point free for every $1 \le k \le s$, whence choosing general $\Phi_{d-1,i}$'s and using \eqref{int4}, we see that $D^d_d(x_k) \neq 0$ for all $k$, that is $x_k\not\in F^d_d$. This proves \eqref{int2}.
Note that a  linear algebra argument shows also that 
\begin{equation}
\label{int1}
F^d_{i} \cap F^d_j \cap \Gamma = \emptyset, \ \ \forall i, j.
\end{equation}
\end{proof}

\begin{cor}
\label{riso}
Let $X$ be a normal Cohen-Macaulay projective irreducible threefold, and let $L$ be a very ample line bundle on $X$. Let $\mathcal E, \mathcal F$ be two locally free sheaves on $X$ such that $\rk(\mathcal F) = \rk(\mathcal E)+1, \det \mathcal E \simeq \det \mathcal F$ and $\mathcal E^* \otimes \mathcal F$ is ample and globally generated. Let $\phi : \mathcal E \to \mathcal F$ be a general morphism and let $C = D_{\rk(\mathcal E)-1}(\phi)$ be its degeneracy locus. Suppose that
\begin{itemize}
\item[(a)] $H^i(\O_X) = 0$ for $i > 0$
\item[(b)] $H^1(\mathcal F(L)) = 0$
\item[(c)] $H^2(\mathcal E(L)) = 0$
\item[(d)] $H^0(\mathcal F \otimes \omega_X(L)) = H^1(\mathcal F \otimes \omega_X(L)) = 0$
\item[(e)] $H^1(\mathcal E \otimes \omega_X(L)) = H^2(\mathcal E \otimes \omega_X(L)) = 0$
\item[(f)] $H^2(\mathcal F \otimes \mathcal F^*) = H^3(\mathcal E \otimes \mathcal F^*) = 0$
\item[(g)] $H^1(\mathcal F \otimes \mathcal E^*) = H^2(\mathcal E \otimes \mathcal E^*) = 0$
\item[(h)] there is a very ample line bundle $H$ on $X$ such that $\mathcal F(L - H)$ is globally generated.
\end{itemize}
Then conditions (i)-(v) of Lemma \ref{lemmetto} are satisfied.
\end{cor}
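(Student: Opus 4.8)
The plan is to verify conditions (i)--(v) of Lemma \ref{lemmetto} one at a time for the curve $C = D_{\rk(\mathcal E)-1}(\phi)$, using the Eagon--Northcott resolution of $\I_{C/X}$ together with the cohomological vanishings (a)--(h). First, since $\mathcal E^* \otimes \mathcal F$ is ample and globally generated, a general $\phi$ has degeneracy locus $C$ which is a smooth irreducible curve with $C \cap \Sing(X) = \emptyset$; in the case $\mathcal E = \O_X(-dH)^{\oplus(d-1)}$, $\mathcal F = \O_X((1-d)H)^{\oplus d}$ this is exactly Proposition \ref{minori}, and the general case follows the same way via \cite[Thm.~2.8]{ott}, \cite[Thm.~1]{Banica} and \cite[Thm.~II]{fl}. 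Condition (i) of Lemma \ref{lemmetto} is literally hypothesis (a), so nothing is needed there.

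The heart of the matter is to turn the hypotheses on $\mathcal E$, $\mathcal F$ into statements about $\I_{C/X}$ and $N_{C/X}$. Since $\rk \mathcal F = \rk\mathcal E + 1$ and $\det\mathcal E \simeq \det\mathcal F$, the Eagon--Northcott complex associated with $\phi$ gives a locally free resolution
\[ 0 \to \mathcal E \mapright{\phi} \mathcal F \to \I_{C/X} \to 0, \]
(the codimension-2 Eagon--Northcott complex degenerates to this short one because $\rk\mathcal F - \rk\mathcal E = 1$, and $\det\mathcal E \simeq \det\mathcal F$ makes the cokernel an ideal sheaf rather than a twist of one). Twisting by $L$ and by $\omega_X(L)$ and chasing the resulting long exact sequences in cohomology then yields: condition (iii), $H^1(\I_{C/X}(L)) = 0$, from (b) $H^1(\mathcal F(L)) = 0$ and (c) $H^2(\mathcal E(L)) = 0$; and condition (iv), $H^0(\I_{C/X}\otimes\omega_X(L)) = H^1(\I_{C/X}\otimes\omega_X(L)) = 0$, from (d) and (e). For condition (v), note $\I_{C/X}(L-H)$ is the cokernel of $\mathcal E(L-H) \to \mathcal F(L-H)$; since $\mathcal F(L-H)$ is globally generated by (h), so is its quotient $\I_{C/X}(L-H)$. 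Finally, condition (vi) of Lemma \ref{lemmetto} ($\omega_X(L)$ globally generated) is not among the conclusions here but is assumed whenever Lemma \ref{lemmetto} is applied, so it need not be reproved.

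The delicate point is condition (ii), $H^1(N_{C/X}) = 0$. Here I would invoke Lemma \ref{kleppe}: $X$ is Cohen-Macaulay, $H^i(\O_X) = 0$ for $0 < i < 3$ by (a), and $C$ is a smooth (hence Cohen-Macaulay, equidimensional of dimension $1$) curve along which $X$ is smooth, so Lemma \ref{kleppe} gives $H^1(N_{C/X}) \simeq \Ext^2_{\O_X}(\I_{C/X}, \I_{C/X})$. It then remains to show this $\Ext^2$ vanishes, and for this I would apply $\Hom_{\O_X}(-, \I_{C/X})$ to the resolution $0 \to \mathcal E \to \mathcal F \to \I_{C/X} \to 0$, or more efficiently use the two-term resolution to compute $\mathcal Ext$ and run the local-to-global spectral sequence. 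The relevant contributions are controlled by $H^j(\mathcal F^* \otimes \I_{C/X})$ and $H^j(\mathcal E^* \otimes \I_{C/X})$ in the appropriate degrees; tensoring the resolution of $\I_{C/X}$ once more by $\mathcal F^*$ and by $\mathcal E^*$ reduces these to $H^2(\mathcal F\otimes\mathcal F^*)$, $H^3(\mathcal E\otimes\mathcal F^*)$ (hypothesis (f)) and $H^1(\mathcal F\otimes\mathcal E^*)$, $H^2(\mathcal E\otimes\mathcal E^*)$ (hypothesis (g)). Bookkeeping the spectral sequence indices so that exactly these four groups appear — and checking that the terms which are not assumed to vanish ($\Hom$ and $\Ext^1$ level terms, and the $H^0$, $H^1$ of $\mathcal Hom(\mathcal F, \I_{C/X})$ etc.) do not contribute to $\Ext^2$ — is the one genuinely fiddly computation; this is where I expect the main obstacle to lie, and it is precisely why hypotheses (f) and (g) are stated in the somewhat unobvious form given. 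Once $\Ext^2_{\O_X}(\I_{C/X},\I_{C/X}) = 0$ is established, condition (ii) follows and all of (i)--(v) are in hand.
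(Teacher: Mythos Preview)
Your proposal is correct and matches the paper's proof almost exactly: the paper uses the same resolution $0 \to \mathcal E \to \mathcal F \to \I_{C/X} \to 0$, applies $\Hom_{\O_X}(-,\I_{C/X})$ directly (no spectral sequence is needed, since $\mathcal E,\mathcal F$ locally free gives $\Ext^i_{\O_X}(\mathcal E,\I_{C/X}) \simeq H^i(\mathcal E^*\otimes\I_{C/X})$ and similarly for $\mathcal F$), and then tensors the resolution by $\mathcal E^*$ and $\mathcal F^*$ to reduce the vanishing of $\Ext^2_{\O_X}(\I_{C/X},\I_{C/X})$ to precisely (f) and (g). The only nuance you omit is that tensoring the resolution by $\omega_X(L)$ may introduce a $\operatorname{Tor}_1$ kernel $\mathcal G$ of support of dimension $\le 1$ (since $\omega_X$ need not be locally free on $X$), but $H^2(\mathcal G)=H^3(\mathcal G)=0$ makes this harmless for the $H^0$ and $H^1$ vanishings needed in (iv).
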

\begin{proof}
First note that (i)  
of Lemma \ref{lemmetto} is (a). 

Moreover, \cite[Ch.\ VI, \S 4, page 257]{acgh} implies that the ideal sheaf of $C$ 
has a resolution 
\begin{equation}
\label{ris}
0 \to \mathcal E \to \mathcal F \to \I_{C/X} \to 0
\end{equation}
so that (v) of Lemma \ref{lemmetto} follows by (h) of this Corollary. Then we get the exact sequences
\[ 0 \to \mathcal E \otimes \mathcal F^* \to \mathcal F \otimes \mathcal F^* \to \I_{C/X} \otimes \mathcal F^* \to 0 \]
and
\[ 0 \to \mathcal E \otimes \mathcal E^* \to \mathcal F \otimes \mathcal E^* \to \I_{C/X} \otimes \mathcal E^* \to 0. \]
Using (f) and (g) we deduce that $H^2(\I_{C/X} \otimes \mathcal F^*) = H^1(\I_{C/X} \otimes \mathcal E^*) = 0$.
Applying $\Hom_{\O_X}(-,\I_{C/X})$ to \eqref{ris} we get the exact sequence
\begin{equation}
\label{uno}
\Ext^1_{\O_X}(\mathcal E, \I_{C/X}) \to \Ext^2_{\O_X}(\I_{C/X}, \I_{C/X}) \to \Ext^2_{\O_X}(\mathcal F, \I_{C/X}).
\end{equation}
Now $\Ext^1_{\O_X}(\mathcal E, \I_{C/X}) \simeq H^1(\I_{C/X} \otimes \mathcal E^*) = 0$, and $\Ext^2_{\O_X}(\mathcal F, \I_{C/X}) \simeq  H^2(\I_{C/X} \otimes \mathcal F^*) = 0$.
By \eqref{uno} and Lemma \ref{kleppe} it follows that $H^1(N_{C/X}) = 0$, that is (ii) of Lemma \ref{lemmetto}.

From \eqref{ris} we also have the exact sequence
\[ 0 \to \mathcal E(L) \to \mathcal F(L) \to \I_{C/X}(L) \to 0 \]
and, using (b) and (c), we get (iii) of Lemma \ref{lemmetto}. 

Finally \eqref{ris} gives an exact sequence
\[ 0 \to \mathcal G \to \mathcal E \otimes \omega_X(L) \to \mathcal F \otimes \omega_X(L) \to \I_{C/X} \otimes \omega_X(L) \to 0 \]
where $\mathcal G$ is a sheaf with support of dimension at most $1$. Using (d) and (e), we get (iv) of Lemma \ref{lemmetto}.
\end{proof}

\section{Proof of main results}

Putting together our tools, Lemma \ref{lemmetto}, Proposition \ref{minori} 
and Corollary \ref{riso}, we now proceed to the proofs.

\subsection{Proof of Theorem \ref{esistenza}}
\begin{proof}
Let $\mathcal E = \O_X(-dH)^{\oplus (d-1)}, \mathcal F = \O_X((1-d)H)^{\oplus d}$ and let $\phi: \mathcal E \to \mathcal F$ be a generic morphism. Note that $H^1(\omega_X(H)) = H^2(\O_X(-H)) = 0$ by \cite[Thm.~7.80 (c)]{ShiffSomm}.
Setting $C = D_{d-2}(\phi)$, it follows by the hypotheses that all conditions (a)-(h) of Corollary \ref{riso} are satisfied. Moreover, by Proposition \ref{minori}, 
$C$ is smooth irreducible, $C \cap \Sing(X) = \emptyset$ and all conditions (i)-(vi) 
of Lemma \ref{lemmetto} are satisfied. We then conclude by Lemma \ref{lemmetto}.
\end{proof}


\subsection{Proof of Corollary \ref{densita'}}
\begin{proof}
We just note that, since we are working with irreducible normal surfaces with rational singularities, the proof of \cite[\S 5]{CilibertoHarriaMirandaNLLocus} works verbatim on the open subset $U(L)$ of $|L|$.
\end{proof}
\subsection{Proof of Theorem \ref{esistenza-torica}}
\begin{proof}
Note that ${\PP_{\Sigma}}$ is normal and $\mathbb Q$-factorial, because it is  toric and  simplicial.
Let $\mathcal E = \O_{\PP_{\Sigma}}(-(d+2)H)^{\oplus (d+1)}, \mathcal F = \O_{\PP_{\Sigma}}(-(d+1))H)^{\oplus (d+2)}$ and let $\phi: \mathcal E \to \mathcal F$ be a generic morphism. We set $L = - K_{\PP_{\Sigma}} + d H$ and check the conditions of Corollary \ref{riso}. 

Note that $-K_{\PP_{\Sigma}}-2H$ is globally generated by \cite[Thm.\ 1.6]{Mavlyutov-semi}, whence $L = -K_{\PP_{\Sigma}}-2H + (d+2)H$ is very ample. Now also $\mathcal F(L-H) \simeq \O_{\PP_{\Sigma}}(-K_{\PP_{\Sigma}}-2H)^{\oplus (d+2)}$ is globally generated, and this gives (h). Using the nefness of $-K_{\PP_{\Sigma}} - 2 H$ we see that conditions (a)-(c), (g) and the first vanishing in (f), follow by Demazure's vanishing theorem \cite[Thm.\ 9.2.3]{CoxLSh}. Also conditions (d) and (e) follow by toric Serre duality \cite[Thm.\ 9.2.10]{CoxLSh} and by Bott-Danilov-Steeenbrink's vanishing theorem \cite[Chapt.\ 3]{Oda88}. Let us see that also the second vanishing in (f) holds, namely that $H^3(\O_{\PP_{\Sigma}}(- H)) = 0$. In fact if $H^3(\O_{\PP_{\Sigma}}(- H)) \neq 0$, then, by toric Serre duality, $H^0(\O_{\PP_{\Sigma}}(K_{\PP_{\Sigma}} + H)) \neq 0$ and therefore also $H^0(\O_{\PP_{\Sigma}}(2K_{\PP_{\Sigma}} + 2H)) \neq 0$. But the latter is dual to $H^3(\O_{\PP_{\Sigma}}(-K_{\PP_{\Sigma}} - 2 H)) = 0$ by Demazure's vanishing theorem, a contradiction.

Therefore all the conditions of Proposition \ref{minori} 
and Corollary \ref{riso} are satisfied and we deduce that conditions (i)-(v) of Lemma \ref{lemmetto} are also satisfied. Since $K_{\PP_{\Sigma}} + L = dH$ is globally generated we also have (vi) of Lemma \ref{lemmetto}. We then conclude by Lemma \ref{lemmetto}.
%
\end{proof}

\subsection{Proof of Corollary \ref{densita'-torica}}
\begin{proof}
The first part of the statement is proved as in  Corollary \ref{densita'}.  Note that $H$ is 0-regular, see Section \ref{esempi}.
Corollary 4.13 and Proposition 3.6 in \cite{bg2}  then  imply the lower bound estimate on the codimension.
The upper bound follows by Proposition \ref{codmax}.
\end{proof}

\bigskip
\frenchspacing
\bibliographystyle{siam}
\bibliography{paper}

\end{document}